\documentclass[11pt]{article}
\usepackage{microtype}
\usepackage{fullpage}
\usepackage{graphicx}
\usepackage{hyperref}


\usepackage{amsmath,amsfonts,bm}


















\def\1{\bm{1}}








\def\vb{{\bm{b}}}
\def\vc{{\bm{c}}}

\def\ve{{\bm{e}}}

\def\vs{{\bm{s}}}

\def\vu{{\bm{u}}}
\def\vv{{\bm{v}}}

\def\vx{{\bm{x}}}
\def\vy{{\bm{y}}}


\def\mA{{\bm{A}}}
\def\mB{{\bm{B}}}
\def\mC{{\bm{C}}}
\def\mD{{\bm{D}}}
\def\mE{{\bm{E}}}
\def\mF{{\bm{F}}}

\def\mH{{\bm{H}}}
\def\mI{{\bm{I}}}
\def\mJ{{\bm{J}}}

\def\mL{{\bm{L}}}

\def\mU{{\bm{U}}}

\def\mW{{\bm{W}}}
\def\mX{{\bm{X}}}

\DeclareMathAlphabet{\mathsfit}{\encodingdefault}{\sfdefault}{m}{sl}
\SetMathAlphabet{\mathsfit}{bold}{\encodingdefault}{\sfdefault}{bx}{n}


\def\gD{{\mathcal{D}}}

\def\gI{{\mathcal{I}}}

\def\gS{{\mathcal{S}}}



\def\sN{{\mathbb{N}}}

\def\sP{{\mathbb{P}}}

\def\sR{{\mathbb{R}}}








\newcommand{\E}{\mathbb{E}}

\newcommand{\R}{\mathbb{R}}



\def\tr{\mathrm{tr}}
\usepackage[utf8]{inputenc} 
\usepackage[T1]{fontenc}    
\usepackage{url}            
\usepackage{booktabs}       
\usepackage{amsfonts}       
\usepackage{nicefrac}       
\usepackage{microtype}      
\usepackage{mathrsfs}
\usepackage{multirow}
\usepackage{algorithm}
\usepackage{algorithmic}
\usepackage{adjustbox}
\usepackage{bbm}
\usepackage{bm}
\usepackage{color}
\usepackage{mathrsfs}
\usepackage{amsmath} 
\usepackage{amsthm}
\usepackage[numbers]{natbib}

\usepackage{float}
\usepackage{diagbox}
\usepackage{subcaption}
\usepackage{amssymb}
\usepackage[dvipsnames]{xcolor}
\usepackage{enumitem}
\newtheorem{thm}{Theorem}[section]
\newtheorem{defn}{Definition}
\newtheorem{lemma}[thm]{Lemma}
\newtheorem{assume}{Assumption}
\newtheorem{remark}[thm]{Remark}
\newtheorem{corollary}[thm]{Corollary}
\newtheorem{prop}{Proposition}

\usepackage[toc,page]{appendix}
\usepackage{tikz}
\usepackage{blkarray}

\newcommand{\norm}[1]{\left\|#1\right\|}

\def\tr{\mathrm{tr}}

\begin{document}
\title{Greedy and Random Broyden's Methods with Explicit Superlinear Convergence Rates in Nonlinear Equations}
\author{Haishan Ye\thanks{Equal Contribution.} 
	\thanks{School of Management; Xi'an Jiaotong University;
		\texttt{hsye\_cs@outlook.com};}
	\and
	Dachao Lin\footnotemark[1] 
	\thanks{Academy for Advanced Interdisciplinary Studies;
		Peking University;
		\texttt{lindachao@pku.edu.cn};}	
	\and
	Zhihua Zhang 
	\thanks{School of Mathematical Sciences;
		Peking University;
		\texttt{zhzhang@math.pku.edu.cn}.}
}
\maketitle
\begin{abstract}
    In this paper, we propose the greedy and random Broyden's method for solving nonlinear equations.
    Specifically, the greedy method greedily selects the direction to maximize a certain measure of progress for approximating the current Jacobian matrix, while the random method randomly chooses a direction.
    We establish explicit (local) superlinear convergence rates of both methods if the initial point and approximate Jacobian are close enough to a solution and corresponding Jacobian.
    Our two novel variants of Broyden's method enjoy two important advantages that the approximate Jacobians of our algorithms will converge to the exact ones and the convergence rates of our algorithms are asymptotically faster than the original Broyden's method.
    Our work is the first time to achieve such two advantages theoretically.  
    Our experiments also empirically validate the advantages of our algorithms.
\end{abstract}

\section{Introduction}

In this paper, we mainly consider the following general nonlinear equation system
\begin{equation}
    \label{obj}
    \mF(\vx) = \bm{0},
\end{equation}
where $\mF: \sR^n \to \sR^n$ is a differentiable vector-valued function. 
This equation appears in many areas, such as control systems \cite{more1989collection}, economic systems \cite{albu2006non}, stationary points of optimization problems, etc.
Many numerical methods have been proposed to solve the nonlinear equation, since obtaining an analytical solution is hard.
Newton's method is a classical method for this problem based on the following iteration:
\[ \vx_{k+1} = \vx_{k} - \left[\mJ(\vx_{k})\right]^{-1}\mF(\vx_{k}), \]
where $\mJ(\vx)\in \sR^{n\times n}$ is the exact Jabobian of $\mF(\vx)$. 
The convergence is quadratic under some mild conditions if $\vx_0$ is sufficiently close to a solution \cite{nocedal2006numerical}.
However, the original Newton's method suffers from expensive computation cost, particularly due to the inverse calculation of the Jacobian which takes $O(n^3)$ flops.
To remedy the heavy computation cost of Newton's method, the quasi-Newton methods \cite{broyden1970convergence2, broyden1970convergence,fletcher1970new, goldfarb1970family, shanno1970conditioning, davidon1991variable, fletcher1963rapidly, broyden1965class,broyden1967quasi,davidon1991variable} are proposed which adopt $\mB_k$ as an approximation of the Jacobian $\mJ(\vx_k)$:
\begin{equation*}
\label{eq:x_up}
\vx_{k+1} = \vx_{k} - \mB_k^{-1}\mF(\vx_{k}),
\end{equation*} 
and update it to mimic the behavior of the true Jacobian at each iteration.
There have been several quasi-Newton methods proposed for solving nonlinear equations such as the stationary
Newton's method \cite{martinez2000practical}, Newton’s method with ``$p$ refinements'' \cite{martinez2000practical}, the discrete
Newton's method \cite{martinez2000practical}, the Broyden's method \cite{broyden1965class}, ABS \cite{abaffy1989abs}, etc.
Among these methods, Broyden’s method, which we show in Algorithm \ref{algo:broyden}, is one of the most prominent quasi-Newton methods for solving nonlinear equations, and there is an ever-growing body of literature available.
The surveys \cite{al2014broyden, griewank2012broyden, martinez2000practical} cover many aspects and provide further references and open problems.
Several works also concerned the extension of Broyden's method to constrained nonlinear systems of equations \cite{marini2018quasi}, set-valued mappings \cite{artacho2014local} and implicit deep learning \cite{bai2019deep, bai2020multiscale}.

\begin{algorithm}[t]
	\caption{Broyden's Method}
	\begin{algorithmic}[1]
		\STATE Initialization: set $\mB_0, \vx_0$.
		\FOR{$ k \geq 0 $}
		\STATE Update $\vx_{k+1} = \vx_{k} - \mB_k^{-1} \mF(\vx_k)$.
		\STATE Set $\vu_{k}=\vx_{k+1}-\vx_k$, $\vy_{k}= \mF(\vx_{k+1})- \mF(\vx_{k})$.
		\STATE Compute 
		$\mB_{k+1} =  \mB_k + \frac{\left(\vy_k-\mB_k\vu_k\right)\vu_k^\top}{\vu_k^\top\vu_k}$.
		\ENDFOR
	\end{algorithmic}
	\label{algo:broyden}
\end{algorithm}

Many works have shown that Broyden's method can achieve a local superlinear convergence rate of $\{\vx_k\} $ \cite{broyden1973local, kelley1991new, dennis1996numerical, griewank1987local}. 
However, the convergence of approximate Jacobian $ \{\mB_k\}  $ is still largely open \cite{mannel2021convergence} and several researchers have mentioned this issue in their works \cite{dennis1977quasi, martinez2000practical, griewank2012broyden}. 
The convergence of $ \{\mB_k\}  $ is for example of interest because it is closely
related to the rate of convergence of $\{\vx_k\} $ \cite{mannel2021convergence}.
The only results are established in \cite{more1976global, mannel2021convergence}, which required uniformly linearly independent $\vu_k$'s. 
Unfortunately, conditions that imply uniform linear independence of $\vu_k$'s are unknown.
Moreover, we also discover the gap between $\mB_k$ and $\mJ_k$ in our numerical experiments.
Hence, we try to propose some variants of Broyden's method to achieve the goal that the approximate Jacobian will converge to the exact one.
Recently, \citet{lin2021explicit} first gave the explicit superlinear convergence rate of the original Broyden's method.
It is a natural question whether one can design a faster quasi-Newton method than the original Broyden's method.

In this paper, we try to solve the above two open problems.
Revisiting the classical Broyden's method and inspired by the greedy and random quasi-Newton for minimizing convex functions \cite{rodomanov2021greedy,lin2021faster},  we propose two novel variants of Broyden's method whose approximate Jacobians will converge to the exact one and they can achieve asymptotic faster convergence rates than the original Broyden's method.
We summarize our contribution as follows. 
\begin{enumerate}
	\item We propose a greedy update of the quasi-Jacobian matrix motivated by maximizing a certain measure of progress. 
	Moreover, we also suggest a random update from common distributions which achieves similar progress in expectation.
	Accordingly, we present greedy and random Broyden's method utilizing such update rules.
	\item We show that if the initial point $\vx_0$ is sufficiently close to one solution $\vx^*$ and
	the initial Jacobian approximation error $\norm{\mB_0-\mJ(\vx^*)}$ is sufficiently small, where matrix $\mB_0\in\R^{n\times n}$ is the initial approximate Jacobian, then the sequence $\{\vx_k\} $ produced by the greedy or random Broyden's methods could converge to the solution $\vx^*$ at a superlinear rate of the form $\left(1-\nicefrac{1}{n}\right)^{k(k-1)/4}$, and $\mB_k$ converges to $\mJ(\vx_k)$ at a linear rate of the form $\left(1-\nicefrac{1}{n}\right)^{k/2}$.
	\item Our methods offer flexible choices of update directions as well as superlinear convergence guarantees. We also verify our theory through numerical experiments.
\end{enumerate}

\paragraph{Organization.}
The paper is organized as follows. After reviewing related work in Section~\ref{sec:related}, we establish some
preliminaries in Section~\ref{sec:preliminaries}, including showing notation and reviewing Broyden's update.
In Section~\ref{sec:gre-ran-method}, we introduce the greedy and random updates and propose greedy and random Broyden's method accordingly. We also present our main results of explicit local superlinear convergence rates of our two algorithms.
Section~\ref{sec:analysis} provides the detailed convergence analysis of greedy and random Broyden's methods.
In Section~\ref{sec:exp}, we perform numerical experiments to support our theoretical results.
Finally, we conclude our work in Section~\ref{sec:conclusion}.

\section{Related Work}\label{sec:related}

Quasi-Newton methods for solving nonlinear equations have been widely studied and several important algorithms have been proposed \cite{broyden1965class,thomas1975sequential,abaffy1989abs,spedicato1978some}.
Among all quasi-Newton methods for solving nonlinear equations, Broyden's method is the most famous and important \cite{broyden1965class,broyden1967quasi,kelley1991new,nocedal2006numerical}.
For each iteration, Broyden's method conducts a rank-one update for approximating the true Jacobian, and the updated matrix has to satisfy the secant equation.
\citet{broyden1973local} provided the first superlinear convergent rate for Broyden's method.
After that, many works gave improved analysis for Broyden's methods \cite{dennis1996numerical,gruver1981algorithmic,kelley1991new,hwang1992convergence,kelley1995iterative,lin2021explicit}. 

Recently, for the unconstrained optimization, i.e., $\min_{\vx} f(\vx)$, \citet{rodomanov2021greedy} gave the first explicit superlinear convergence rate for the greedy quasi-Newton methods which take updates by greedily selecting from basis vectors to maximize a certain measure of progress.
This work established an explicit non-asymptotic rate of the local superlinear convergence $(1-\nicefrac{1}{n\kappa})^{k^2/2}$, where $k$ is the iteration numbers and $\kappa$ is the condition number of the objective function trying to minimize.
\citet{lin2021faster} improved the rates of greedy quasi-Newton updates as well as random quasi-Newton updates and obtained a faster condition-number-free superlinear convergence rate $(1-\nicefrac{1}{n})^{k^2/2}$.
Our proposed methods are similar to theirs but mainly applied to general nonlinear equations under Broyden's methods. 
Since Broyden's method can also be applied to such cases by solving $\nabla f(\vx)=\bm{0}$, as well as other important problems, such as fixed-point problems: $\vx-\mF(\vx)=\bm{0}$.
Therefore, our variants of Broyden's method for the general nonlinear equation are still meaningful. 
 

\citet{jin2020non} provided a dimension-free superlinear convergence rate for Broyden-Fletcher-Goldfarb-Shanno (BFGS) method \cite{broyden1970convergence2, broyden1970convergence,fletcher1970new, goldfarb1970family, shanno1970conditioning}, and  Davidon-Fletcher-Powell (DFP) method \cite{davidon1991variable, fletcher1963rapidly} if the initial point and approximate Hessian are close enough to the solution and its true Hessian. 
The assumption used in \cite{jin2020non} is much similar to ours. 
Our work also requires the assumptions that both $\norm{\vx_0-\vx^*}$ and $\norm{\mB_0-\mJ(\vx^*)}$ should be small enough, but we mainly focus on nonlinear equations.
Furthermore, we choose $\vu_k$ greedily or randomly in the update of the approximate Jacobian which is much different from the one in \cite{jin2020non}. 

There are many variants of Broyden's method. Here we list some popular implementations and analogical versions of ours.
Broyden's ``bad'' method \cite{broyden1965class, martinez2000practical} directly applies the derivation to $\mH_k:=\mB_k^{-1}$ as below:
\[ \mH_{k+1} = \mH_k + \frac{\left(\vu_k-\mH_k\vy_k\right)\vy_k^\top}{\vy_k^\top\vy_k}. \]
In contrast, the ``bad'' method may perform not so well but on a small set of test problems \cite{al2014broyden}.
\citet{martinez2000practical} also mentioned a combined method that chose the ``good'' and the ``bad'' methods according to some conditions.

Moreover, some of the variants focus on handling the singularity of $\mB_k$ in Broyden's update (Line 5 in Algorithm \ref{algo:broyden}), such as choosing the minimum-norm
solution using a pseudoinverse minimizer $\mB_k^+$ instead of undefined $\mB_k^{-1}$, or a well-known approximation of the pseudoinverse $\left(\mB_k^\top\mB_k+\mu\mI_n\right)^{-1}\mB_k^\top$ with some $\mu>0$ to replace $\mB_k^+$ \cite{martinez2000practical}. 
Damped Broyden update \cite{al2014broyden, more1976global} adopts a modified update rule 
\[ \mB_{k+1} =  \mB_k + \theta_k \cdot  \frac{\left(\vy_k-\mB_k\vu_k\right)\vu_k^\top}{\vu_k^\top\vu_k}. \]
with alternative $\theta_k$ to guarantee conservative small-variation of $\left|\det\left(\mB_k\right)\right|$.
Furthermore, for some specific problems, researchers proposed structured methods \cite{avila1979update, hart1992solution}, such as using $ LU $ decomposition $\mB_0=\mL\mU$ throughout the calculations if $\mL$ is sparse and its inverse is easy to compute. Another method uses the decomposition $\mJ(\vx) = \mC(\vx) + \mD(\vx)$ if $ \mC(\vx) $ is easy to compute, and updates $\mD_k$ to approximate $\mD(\vx_k)$ from Broyden's update.
There also have other secant methods. For example, the Column-Updating method (COLUM) \cite{martinez1984quasi} is defined as
\[ \mB_{k+1} = \mB_k+\frac{\left(\vy_k-\mB_k\vu_k\right)\ve_{j_k}^\top}{\ve_{j_k}^\top\vu_k}, \]
where $|\ve_{j_k}^\top \vu_k| = \norm{\vu_k}_{\infty}$.
Such an update could reduce the computational cost of Broyden's method. Similarly, we also adopt coordinate vectors $\{\ve_1, \dots, \ve_n\}$ for updating the approximate matrix under a more aggressive strategy mentioned in Section \ref{sec:gre-ran-up}.
Furthermore, some work also employed an update rule as 
\[ \mB_{k+1} = \mB_k+\frac{\left(\vy_k-\mB_k\vu_k\right)\vv_{k}^\top}{\vv_{k}^\top\vu_k}, \text{ where } \vv_{k} \perp \vu_{k-1}, \]
which in some sense are close to the multipoint secant methods studied in \cite{burdakov1983stable, burdakov1986superlinear, schnabel1983quasi}.
These variants makes Broyden's method more practical and stable.
\section{Preliminaries}\label{sec:preliminaries}
\subsection{Notation}
We denote vectors by lowercase bold letters (e.g., $ \vu, \vx$), and matrices by capital bold letters (e.g., $ \mW = [w_{i j}] $).
We use $[n]:=\{1,\dots,n\}$ and $\mI_n$ is the $\sR^{n \times n}$ identity matrix.
Moreover, $\norm{\cdot}$ denotes the $\ell_2$-norm (standard Euclidean norm) for vectors, or induced $2$-norm (spectral norm) for a given matrix: $\norm{\mA} = \sup_{\norm{\vu}=1, \vu\in\sR^n}\norm{\mA\vu}$, and $\norm{\cdot}_F$ denotes the Frobenius norm of a given matrix: $\norm{\mA}_F^2 = \sum_{i=1}^{m} \sum_{j=1}^n a_{i j}^2$, where $\mA=[a_{i j}] \in \sR^{m\times n}$.
We also adopt $s_i(\mA)$ be its $ i $-th largest singular value ($i = 1, \dots, \min\{m,n\}$). 
When $\mA$ is nonsingular and $m=n$, we define its condition number as $\kappa(\mA):= s_1(\mA)/s_n(\mA)$. 
For two real symmetric matrices $\mA, \mB \in \sR^{n \times n}$, we denote $\mA\succeq\mB$ (or $\mB \preceq \mA$) if $\mA-\mB$ is a positive semidefinite matrix.
We use the standard $O(\cdot), \Omega(\cdot)$ and $\Theta(\cdot)$ notation to hide universal constant factors.

\subsection{Two Assumptions}

To analyze the convergence properties of our novel algorithms,  we first list two standard assumptions in convergence analysis of quasi-Newton methods.

\begin{assume}\label{ass:lisp}
	The Jacobian $\mJ(\vx)$ is Lipschitz continuous related to $\vx^*$ with
	parameter $M$ under induced $2$-norm, i.e., 
	\begin{equation}\label{eq:lisp}
		\norm{\mJ(\vx)-\mJ(\vx^*)} \leq M \norm{ \vx-\vx^*}.
	\end{equation}
\end{assume}

Such assumption also appears in \cite{jin2020non}, and the condition in
Assumption \ref{ass:lisp} only requires Lipschitz continuity in the direction of the optimal solution $\vx^*$. 
Indeed, that condition is weaker than general Lipschitz continuity for any two points. 
It is also weaker than the strong self-concordance recently proposed by \citet{rodomanov2021greedy, rodomanov2021new,rodomanov2021rates}.

In addition, we must show that the sequence $ \{\mB_k\}  $(or $ \{\mH_k\} $) generated by our variants  of Broyden's updates (Algorithm~\ref{algo:broyden-greedy-random}) exist and remain nonsingular shown in Assumption \ref{ass:optimal} below.

\begin{assume}\label{ass:optimal}
	The Broyden sequence $ ({\vx_k}, {\mH_k}) $ in Algorithm \ref{algo:broyden-greedy-random} is well-defined, that is $\mH_k$ is nonsingular for all
	$k$, and $ \vx_{k+1} $ and $ \mH_{k+1} $ can be well-defined by computing from $ \vx_{k} $ and $ \mH_{k}$. Moreover, the solution $\vx^*$ of Eq.~\eqref{obj} is nondegenerate, i.e., $\mJ_{*}:=\mJ(\vx^*)$ is nonsingular or $\mJ_{*}^{-1}$ exists.
\end{assume}

Assumption \ref{ass:optimal} also appears in \cite{dennis1996numerical, nocedal2006numerical}.
Additionally, along with the classical result on quadratic convergence of Newton’s methods, we need some sort of regularity conditions for the objective Jacobian following Assumption \ref{ass:lisp}.

\subsection{Broyden's Update}
Before starting our theoretical results, we briefly review Broyden's update. We focus on solving a nonlinear equation:
\begin{equation*}
\mF(x) = \bm{0},
\end{equation*}
where $\mF: \sR^n\to \sR^n$ is differentiable. 
We denote its Jacobian as $\mJ(\vx)\in\sR^{n \times n}$. 
Similar to quasi-Newton methods, we need iteratively update the approximate Jacobian $\{\mB_k\}$ to approach the true Jacobian matrix, while updating the parameters $\{\vx_k\}$ as:
\begin{equation*}
    \vx_{k+1} = \vx_k-\mB_k^{-1}\mF(\vx_k).
\end{equation*}

In several quasi-Newton methods, the alternative matrix $\mB_k$ need satisfy the secant condition, that is 
\begin{equation}\label{eq:secant}
\mB_k\vu_k=\vy_k, \mbox{ with }\vu_k = \vx_{k+1}-\vx_k,\; \vy_{k+1}=\mF(\vx_{k+1})-\mF(\vx_k).
\end{equation} 
However, we could not obtain a definite solution with only Eq.~\eqref{eq:secant} because it is an underdetermined linear system with an infinite number of solutions. 
Previous works adopt several different extra conditions to give a reasonable solution. 
The most common concern chooses the Jacobian approximation $\mB_{k+1}$ sequence is as close as possible to $\mB_{k}$ under a measure. 
The Broyden's update \cite{broyden1965class, kelley1995iterative} defined as follows 
\begin{equation}\label{eq:B-update-good}
	\mB_{k+1} =  \mB_k + \frac{\left(\vy_k-\mB_k\vu_k\right)\vu_k^\top}{\vu_k^\top\vu_k},
\end{equation}
is obtained by solving the following problem:
\begin{equation*}
\min_{\mB} \norm{\mB-\mB_k}_F, \ s.t., \mB\vu_k=\vy_k.
\end{equation*} 
Additionally, for the practical implementation, researchers would adopt Sherman-Morrison-Woodbury formula \cite{sherman1950adjustment} for updating the inverse of matrix $\mH_k := \mB_{k}^{-1}$ as follows:
\begin{equation*}
\mH_{k+1}=\mH_k-\frac{\left(\mH_k \vy_k-\vu_k\right) \vu_k^\top \mH_k}{\vu_k^\top\mH_k\vy_k}.
\end{equation*}
For better understanding the update rules of Broyden's methods, we turn to a simple linear objective $\mF(\vx) =\mA\vx-\vb$ for an explanation. 
When applied to the linear objective, we could simplify the update rules using the fact $\vy=\mA\vu$, which leads to the definition of Broyden's update as below.
\begin{defn}[Broyden's Update]\label{defn:broyd}
	Letting $\mB, \mA\in\sR^{n \times n}, \vu\in\sR^n$, we define
    \[ \text{Broyd} \left(\mB, \mA, \vu \right) := \mB +  \frac{\left(\mA-\mB\right)\vu\vu^\top}{\vu^\top\vu}. \]
\end{defn}
Then, for the simple linear objective $\mF(\vx) =\mA\vx-\vb$, Broyden's update can be written as $\mB_{k+1}=\text{Broyd} \left(\mB_k, \mA, \vu_k\right)$. 
We list the following well-known properties of Broyden's update:
\begin{prop}\label{prop:Broyden}
	Let $\mA, \mB \in\sR^{n\times n}, \vu\in\sR^n$ and $\mA$ is nonsingular. Denote $\mB_+ = \text{Broyd} \left(\mB, \mA, \vu\right)$ and set $\overline{\mB} := \mC\left(\mB-\mA\right)$, $\overline{\mB}_+ := \mC\left(\mB_+-\mA\right)$ for any nonsingular matrix $\mC \in \sR^{n \times n}$, then we have
	\begin{equation}\label{bro-up-prop}
	\overline{\mB}_+\overline{\mB}_+^\top = \overline{\mB}\overline{\mB}^\top-  \frac{\overline{\mB}\vu\vu^\top\overline{\mB}^\top}{\vu^\top\vu}.
	\end{equation}
	Therefore, we also have
	\begin{equation}\label{bro-up-prop-ex}
	\norm{\overline{\mB}_+}_F \leq \norm{\overline{\mB}}_F, \norm{\overline{\mB}_+} \leq \norm{\overline{\mB}}.
	\end{equation}
\end{prop}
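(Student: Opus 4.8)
The plan is to reduce everything to a single observation: on a linear objective, Broyden's update multiplies the error matrix $\mB-\mA$ on the right by the orthogonal projector onto the hyperplane $\{\vu\}^\perp$. Concretely, I would first compute $\mB_+ - \mA$ directly from Definition \ref{defn:broyd} (here $\vu\neq\bm 0$ is implicit, otherwise the update is undefined):
\[
\mB_+ - \mA \;=\; (\mB-\mA) - \frac{(\mB-\mA)\vu\vu^\top}{\vu^\top\vu} \;=\; (\mB-\mA)\,\mP, \qquad \mP := \mI_n - \frac{\vu\vu^\top}{\vu^\top\vu}.
\]
The matrix $\mP$ is the orthogonal projector onto $\{\vu\}^\perp$, so $\mP=\mP^\top$, $\mP^2=\mP$, and $\norm{\mP}=1$. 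Left-multiplying by $\mC$ gives $\overline{\mB}_+ = \mC(\mB_+-\mA) = \mC(\mB-\mA)\mP = \overline{\mB}\,\mP$.

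With this factorization the first claim is immediate:
\[
\overline{\mB}_+\overline{\mB}_+^\top = \overline{\mB}\,\mP\mP^\top\overline{\mB}^\top = \overline{\mB}\,\mP^2\,\overline{\mB}^\top = \overline{\mB}\,\mP\,\overline{\mB}^\top = \overline{\mB}\,\overline{\mB}^\top - \frac{\overline{\mB}\vu\vu^\top\overline{\mB}^\top}{\vu^\top\vu},
\]
which is exactly \eqref{bro-up-prop}. For \eqref{bro-up-prop-ex}, note the subtracted term $\frac{(\overline{\mB}\vu)(\overline{\mB}\vu)^\top}{\vu^\top\vu}$ is positive semidefinite, hence $\overline{\mB}_+\overline{\mB}_+^\top \preceq \overline{\mB}\,\overline{\mB}^\top$. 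Taking traces yields $\norm{\overline{\mB}_+}_F^2 = \Tr(\overline{\mB}_+\overline{\mB}_+^\top) \le \Tr(\overline{\mB}\,\overline{\mB}^\top) = \norm{\overline{\mB}}_F^2$, and taking the largest eigenvalue (monotone under $\preceq$ on the PSD cone) yields $\norm{\overline{\mB}_+}^2 \le \norm{\overline{\mB}}^2$; alternatively $\norm{\overline{\mB}_+} = \norm{\overline{\mB}\,\mP} \le \norm{\overline{\mB}}\,\norm{\mP} = \norm{\overline{\mB}}$.

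There is no real obstacle here: the whole content is the algebraic identity $\mB_+-\mA = (\mB-\mA)\mP$, and the rest follows from $\mP$ being an orthogonal projection. The only points worth stating carefully are that $\vu\neq\bm 0$ is assumed implicitly, that nonsingularity of $\mA$ is not actually used in this proposition (it is carried along only for later results), and, for the spectral-norm bound, the elementary fact that $0\preceq X\preceq Y$ implies $\norm{X}\le\norm{Y}$.
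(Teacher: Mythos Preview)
Your proposal is correct and follows essentially the same route as the paper: both reduce to the identity $\mB_+-\mA=(\mB-\mA)\bigl(\mI_n-\tfrac{\vu\vu^\top}{\vu^\top\vu}\bigr)$, left-multiply by $\mC$, and use idempotence of the projector to obtain \eqref{bro-up-prop}; the Frobenius bound is then the trace of that identity. The only cosmetic difference is in the spectral-norm step: the paper evaluates $\norm{\overline{\mB}_+^\top\vv}^2$ on unit vectors, whereas you invoke the PSD ordering (or submultiplicativity with $\norm{\mP}=1$); these are equivalent one-line arguments.
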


\begin{remark}
	Proposition \ref{prop:Broyden} shows the monotonicity of Broyden's update under specific measures, e.g. $\norm{\cdot}, \norm{\cdot}_F$, which provides controllable sequence $\{\mB_n\}$ under iterative Broyden's update in Eq.~\eqref{eq:B-update-good}.
	These properties motivate our proofs in the subsequent sections when the initial conditions (of $\mB_0$) are benign.
\end{remark}
\section{Greedy and Random Broyden's Method}\label{sec:gre-ran-method}

In this section, we will first give a detailed algorithmic description and show how our greedy and random Broyden's methods find the solution of the objective function.
Then, we will provide the intuition behind our novel algorithms and give the reason why the approximate Jacobians in these two algorithms can converge to the exact one.
Next, we will give the main convergence properties of our two algorithms.
Finally, we compare the convergence rates of our algorithms with the original Broyden's method and show that our algorithms achieve faster asymptotic convergence rates.

\subsection{Algorithm Description}

The main difference among the greedy, random and classical Broyden's methods lies in the way to choosing $\vu_k$.
For the greedy Broyden's method, we choose $\vu_k$ by maximizing the following problem
\begin{equation*}
    \vu_k = \mathop{\arg\max}_{\vu \in \{\ve_1,\dots,\ve_n\}}\norm{\mB_k \vu - \mJ(\vx_{k+1})\vu},
\end{equation*}
where $\mB_k$ is the current approximate Jacobian and $\mJ(\vx_{k+1})$ is the exact Jacobian at $\vx_{k+1}$.
Accordingly, we can obtain that
\begin{equation}
\label{eq:y_k}
    \vy_k = \mJ(\vx_{k+1})\vu_k.
\end{equation}
Using the above $\vu_k$ and $\vy_k$, we can update the approximate Jacobian by Eq.~\eqref{eq:B-update-good}. 
Combining the algorithmic procedure of the classical Broyden's method, we can obtain the update rule of the greedy Broyden' method as follows:
\begin{equation}
\label{eq:x-up}
    \left\{
    \begin{aligned}
    \vx_{k+1} =& \vx_k - \mB_k^{-1} \mF(\vx_k)\\
    \vu_k =& \mathop{\arg\max}_{\vu \in \{\ve_1,\dots,\ve_n\}}\norm{\mB_k \vu - \mJ(\vx_{k+1})\vu}\\
    \vy_k =& \mJ(\vx_{k+1}) \vu_k
    \\
    \mB_{k+1} =& \mB_k + \frac{\left(\vy_k - \mB_k\vu_k\right)\vu_k^\top}{\vu_k^\top\vu_k}.
    \end{aligned}
    \right.
\end{equation}

For the random Broyden's method, we will choose $\vu_k$ randomly from distribution $\vu_k \sim \mathrm{Unif}\left(\left\{\ve_1, \dots, \ve_n\right\}\right)$.
Note that, $\vu_k$ can be chosen randomly from a broad class of distribution other than $\mathrm{Unif}\left(\left\{\ve_1, \dots, \ve_n\right\}\right)$ just as shown in the next subsection.
$\vy_k$ can be computed as Eq.~\eqref{eq:y_k} and the approximate Jacobian updates as Eq.~\eqref{eq:B-update-good} using obtained $\vu_k$ and $\vy_k$.
The whole algorithmic procedure of the random Broyden's method is the same as the one of the greedy Broyden's method except the way to obtain $\vu_k$.
Thus, the update rule of the random Broyden's method can be summarized by Eq.~\eqref{eq:x-up} but with  $\vu_k \sim \mathrm{Unif}\left(\left\{\ve_1, \dots, \ve_n\right\}\right)$.

The detailed algorithmic description of the greedy and random Broyden's methods are listed in Algorithm~\ref{algo:broyden-greedy-random}.

\begin{algorithm}[t]
	\caption{Greedy or Random Broyden's Method.}
	\begin{algorithmic}[1]
		\STATE Initialization: set $\mB_0, \vx_0$.
		\FOR{$ k \geq 0 $}
		\STATE Update $\bm{x}_{k+1} = \bm{x}_{k} - \mB_k^{-1} \mF(\vx_k)$.
		\STATE Coose $\bm{u}_k$ from \\
		1) \textit{greedy update}: $ \vu_k = \mathop{\arg\max}_{\vu \in \{\ve_1, \dots, \ve_n\}} \norm{\mB_k\vu-\mJ(\vx_{k+1})\vu}$, or \\
		2) \textit{random update}: $\vu_k \sim \mathrm{Unif}\left(\left\{\ve_1, \dots, \ve_n\right\}\right)$.
		\STATE Compute $\mB_{k+1} = \mB_k +  \frac{\left(\mJ(\vx_{k+1}) \vu_k - \mB_k\vu_k\right)\vu_k^\top}{\vu_k^\top\vu_k}$.
		\ENDFOR
	\end{algorithmic}
	\label{algo:broyden-greedy-random}
\end{algorithm}

\subsection{Motivation}
\label{sec:gre-ran-up}
It is well-known that the approximate Hessian of the quasi-Newton methods for unconstrained optimization will not converge to the true Hessian.
Similarly, quasi-Newton methods for solving nonlinear equations will not converge to the true Jacobian either.
Many works have provided several counterexamples that the approximate Hessian or Jacobian will not converge to the exact ones \cite{nocedal2006numerical,ren1983convergence,dennis1996numerical}.
In contrast to the quasi-Newton methods, Newton's method takes advantage of the current exact Jacobian in each step and achieves the quadratic convergence rate.
Such concern leads us to adjust the update rule to make $\{\mB_k\} $ converge to $\mJ(\vx^*)$ as close as possible.

Proposition \ref{prop:Broyden} shows, the classical Broyden's method only guarantees the distance between the approximate Jacobian and the exact one is non-increasing, but can not provide  convergence guarantee. 
The reason behind this phenomenon lies in the way of choosing $\vu_k$  shown in Eq~\eqref{eq:secant} and it is the reason why the approximate Jacobian of classical Broyden's method can not converge to the exact one. 
On the other hand, for Broyden's method, it is free to choose $\vu_k$.
Thus, we try to design novel Broyden's methods  by choosing proper $\vu_k$ such that the approximate Jacobian can converge to the true one iteratively. 
Specifically, we use the simple linear objective $\mF(\vx) = \mA\vx -\vb$ as an example and try to minimize the distance between $\mB_k$ and $\mA$ for the Frobenius norm.
For brevity, we denote $\mB_+ = \text{Broyd} \left(\mB, \mA, \vu\right)$ following Definition \ref{defn:broyd} and $\mB$ is the current approximate Jacobian.
One possible starting point is obtaining the greedy improvement under measure $\norm{\cdot}_F$ in each step from Proposition \ref{prop:Broyden}. 
That is, we choose the best update direction $\vu$ from a certain candidate set $\gI$:
\begin{equation*}
	\tilde{\vu} {=} \mathop{\arg\max}_{\vu \in \gI} \left(\norm{\mC\left(\mB{-}\mA\right)}_F^2 {-} \norm{\mC\left(\mB_+{-}\mA\right)}_F^2\right) {=} \mathop{\arg\max}_{\vu \in \gI} \frac{\vu^\top\left(\mB{-}\mA\right)^\top\mC^\top\mC\left(\mB{-}\mA\right)\vu}{\vu^\top\vu}.
\end{equation*}
At the same time, the way to find proper $\vu$ should take the computation cost into consideration.
The classical Broyden's method commonly needs $O(n^2)$ computation complexity for each iteration. Hence, we require that the computation cost of the greedy step for selecting $\vu$ is no larger than $O(n^2)$. 
Accordingly, we propose $\mC = \mI_n$ and $\gI = \{\ve_1, \dots, \ve_n\} $, that is, 
\begin{equation}\label{eq:greedy-u}
	(Greedy) \ \tilde{\vu} = \mathop{\arg\max}_{\vu \in \{\ve_1, \dots, \ve_n\}} \frac{\vu^\top\left(\mB-\mA\right)^\top\left(\mB-\mA\right)\vu}{\vu^\top\vu} = \mathop{\arg\max}_{k \in [n]} \norm{\left(\mB-\mA\right)\ve_k}^2.
\end{equation}
It is easy to check that the greedy step of finding proper $\tilde{\vu}$ takes still $O(n^2)$ computation cost.
By the fact that 
\begin{align*}
\label{eq:greed_u}
    \norm{(\mB-\mA)\tilde{\vu}} = \max_{k\in[n]}\norm{(\mB-\mA)\ve_k}^2 \ge \frac{1}{n}\sum_{k=1}^n\norm{(\mB-\mA)\ve_k}^2 = \frac{1}{n}\norm{\mB - \mA}^2_F.
\end{align*}
Combining with Eq.~\eqref{bro-up-prop}, we can obtain that
\begin{align*}
    \norm{\mB_+ - \mA}_F^2 
    \le
    \left(1 - \frac{1}{n}\right) \norm{\mB - \mA}_F^2,
\end{align*}
that is, by selecting $\vu$ greedily with respect to the Frobenius norm, the approximate Jacobian can converge to the true one $\mA$ linearly with rate $1 - \frac{1}{n}$.
This is the total intuition to design our greedy Broyden's method which guarantees that the approximate Jacobian will converge the exact one.
At the same, if the approximate Jacobian $\mB_{k}$ is close to $\mA$, then  the update 
\begin{equation}
\label{eq:x_up_1}
\vx_{k+1} = \vx_{k} - \mB_{k}^{-1}\mF(\vx_{k})
\end{equation}
can obtain a fast converge rate.
Since $\mB_k$ will converge to $\mA$ linearly, i.e., $\mB_k$ is getting closer and closer to $\mA$, the convergence rate of update \eqref{eq:x_up_1} become faster and faster as $k$ increases.
Thus, the greedy Broyden method can achieve superlinear convergence.

Other than greedily selecting $\vu_k$ which requires extra computation, we may wonder whether we can find $\vu$ randomly such that a similar result as Eq.~\eqref{eq:greedy-u} also holds in expectation.
Fortunately, such  $\vu$'s exist and only need to satisfy that 
\begin{equation}\label{eq:random-u}
(Random) \ \ \vu \sim\gD, \ \E_{\vu\sim \gD}\frac{\vu}{\norm{\vu}} \cdot \frac{\vu^\top}{\norm{\vu}} \propto \mI_n.
\end{equation}
The above condition requires normalized $\vu$ has isotropic second-order moment.
There are lots of examples satisfying Eq.~\eqref{eq:random-u} such as $\mathrm{Unif}\left(\left\{\ve_1, \dots, \ve_n\right\}\right)$, $\mathrm{Unif}(\mathcal{S}^{n-1})$ and $\mathcal{N}(\bm{0}, \mI_n)$.

We summarize how the greedily selected $\vu$ and the randomly selected $\vu$ help to approximate the true Jacobian in the following lemma. 

\begin{lemma}\label{lemma:b-progress}
	Let $\mA, \mB \in\sR^{n\times n}, \vu\in\sR^n$ and $\mA$ be nonsingular. Denoting $\mB_+ = \text{Broyd} \left(\mB, \mA, \vu\right)$ with $\vu$ selected from the greedy update in Eq.~\eqref{eq:greedy-u}, then we have
	\begin{equation*}
		\norm{\mB_+-\mA}_F^2 \leq \left(1-\frac{1}{n}\right) \norm{\mB-\mA}_F^2.
	\end{equation*}
	Moreover, if choosing $\vu$ based on the random update in Eq.~\eqref{eq:random-u}, then  it holds that
	\begin{equation*}
		\E_{\vu} \norm{\mB_+-\mA}_F^2 = \left(1-\frac{1}{n}\right) \norm{\mB-\mA}_F^2.
	\end{equation*}
\end{lemma}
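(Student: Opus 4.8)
The plan is to reduce both claims to a single scalar identity obtained by taking the trace of Eq.~\eqref{bro-up-prop} in Proposition~\ref{prop:Broyden}. Apply that proposition with $\mC=\mI_n$, $\overline{\mB}=\mB-\mA$ and $\overline{\mB}_+=\mB_+-\mA$. Taking the trace of both sides of $\overline{\mB}_+\overline{\mB}_+^\top=\overline{\mB}\overline{\mB}^\top-\frac{\overline{\mB}\vu\vu^\top\overline{\mB}^\top}{\vu^\top\vu}$, and using $\tr(\mM\mM^\top)=\norm{\mM}_F^2$ together with $\tr(\overline{\mB}\vu\vu^\top\overline{\mB}^\top)=\norm{\overline{\mB}\vu}^2$, yields the key per-step identity
\begin{equation*}
\norm{\mB_+-\mA}_F^2=\norm{\mB-\mA}_F^2-\frac{\norm{(\mB-\mA)\vu}^2}{\norm{\vu}^2}.
\end{equation*}
Everything else is a matter of lower-bounding (respectively, computing the expectation of) the subtracted term.

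For the greedy part: with $\vu=\ve_k$ chosen as in Eq.~\eqref{eq:greedy-u} one has $\norm{\vu}^2=1$ and $\norm{(\mB-\mA)\vu}^2=\max_{k\in[n]}\norm{(\mB-\mA)\ve_k}^2$. Since a maximum of $n$ nonnegative numbers is at least their average, this is $\ge\frac1n\sum_{k=1}^n\norm{(\mB-\mA)\ve_k}^2=\frac1n\norm{\mB-\mA}_F^2$; substituting into the identity gives the stated contraction. (This simply repackages the informal computation that precedes the lemma statement.)

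For the random part: I would first observe that the proportionality constant in Eq.~\eqref{eq:random-u} is forced to equal $1/n$, because $\tr\!\big(\tfrac{\vu\vu^\top}{\norm{\vu}^2}\big)=1$ for every $\vu\ne\bm{0}$, hence $\tr\E_{\vu}\big[\tfrac{\vu\vu^\top}{\norm{\vu}^2}\big]=1$, so $\E_{\vu}\big[\tfrac{\vu\vu^\top}{\norm{\vu}^2}\big]=\frac1n\mI_n$. Taking expectations in the per-step identity and moving the expectation through the trace by linearity gives
\begin{equation*}
\E_{\vu}\frac{\norm{(\mB-\mA)\vu}^2}{\norm{\vu}^2}=\tr\!\Big((\mB-\mA)\,\E_{\vu}\Big[\tfrac{\vu\vu^\top}{\norm{\vu}^2}\Big]\,(\mB-\mA)^\top\Big)=\frac1n\norm{\mB-\mA}_F^2,
\end{equation*}
which is exactly the claimed equality. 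I would add the one-line check that $\mathrm{Unif}(\{\ve_1,\dots,\ve_n\})$ satisfies Eq.~\eqref{eq:random-u}, since $\frac1n\sum_k\ve_k\ve_k^\top=\frac1n\mI_n$.

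There is no real obstacle: once one takes traces, the lemma is essentially a one-line consequence of Proposition~\ref{prop:Broyden}. The only mildly delicate points are the ``max $\ge$ average'' step in the greedy case and, in the random case, pinning the normalizing constant to $1/n$ and justifying the interchange of expectation and trace — immediate for finitely supported $\vu$, and still routine (Fubini / dominated convergence plus a finite second-moment check) for $\mathrm{Unif}(\mathcal S^{n-1})$ or $\mathcal N(\bm{0},\mI_n)$.
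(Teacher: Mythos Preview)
Your proposal is correct and follows essentially the same argument as the paper: apply Proposition~\ref{prop:Broyden} with $\mC=\mI_n$, take traces to get the per-step identity, then use the ``max $\ge$ average'' bound for the greedy case and the trace-normalization $\E_{\vu}[\vu\vu^\top/\norm{\vu}^2]=\tfrac{1}{n}\mI_n$ for the random case. Your presentation is slightly more careful about pinning the proportionality constant and justifying the expectation-trace interchange, but the route is identical.
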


\begin{remark}
	The classical Broyden's update could not provide such an explicit decreasing bound with respect to $\norm{\cdot}_F$, since there is no requirement of the direction $\vu$. After using greedy update or random update, we can guarantee that the approximate Jacobian converges linearly to the exact Jacobian matrix $\mA$.
\end{remark}

Lemma~\ref{lemma:b-progress} also provides the intuition  of the Jacobian convergence for general nonlinear equations since a general nonlinear can be approximated by a linear function $\mF(\vx) = \mF(\vx^*) + \mJ(\vx^*)(\vx - \vx^*)$ which is the first-order Taylor's expansion around one solution $\vx^*$. 
In the following subsection, we will give a detailed description that how the approximate Jacobians in greedy and random Broyden's methods converge to the exact ones for the general nonlinear equation.
Accordingly, we also show how the approximate Jacobian helps greedy and random Broyden's methods to achieve superlinear convergence rates.

\subsection{Main Results}

First, we give a theorem that sketches the convergence properties of our greedy and random Broyden's methods. 
\begin{thm}\label{thm:super}
	Suppose $n\geq 2$, the objective function $\mF(\cdot)$ satisfy Assumption~\ref{ass:lisp} and sequences $\{\vx_k\}$ and $\{\mH_k\}$ generated by Algorithm~\ref{algo:broyden-greedy-random} satisfy Assumptions~\ref{ass:optimal}. We denote $\|\mJ_{*}^{-1}\| = c$. If the initals $\vx_0$ and $\mB_0$ satisfy
	\begin{equation}\label{eq:init}
	48 \sqrt{n}c M \norm{\vx_0 - \vx^*} + c \norm{\mB_0 - \mJ(\vx_0)} \leq \frac{1}{3},
	\end{equation}
	then we have for all $k\geq 0$, 
	\begin{equation}\label{eq:sigma-linear}
		\E \left[ c\norm{\mB_k - \mJ(\vx_k)}\right] \leq e \left(1-\frac{1}{n}\right)^{k/2},
	\end{equation}
	and 
	\begin{equation}\label{eq:super-bound}
		\E \left[ \frac{\norm{\vx_{k+1}-\vx^*}}{\norm{\vx_{k}-\vx^*}}\right] \leq e \left(1-\frac{1}{n}\right)^{k/2}.
	\end{equation}
	The expectation considers randomness of the direction $\vu_0, \dots, \vu_{k}$, and when applied to the greedy method, we can view it with no randomness for the same notation. 
\end{thm}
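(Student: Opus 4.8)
The plan is to run a coupled induction on two scalar quantities: the iterate error $r_k := \norm{\vx_k-\vx^*}$ and a Jacobian-approximation error $\sigma_k := c\norm{\mB_k-\mJ(\vx_k)}_F$, measured in the Frobenius norm so that Lemma~\ref{lemma:b-progress} applies verbatim; since $\norm{\mB_k-\mJ(\vx_k)}\le\norm{\mB_k-\mJ(\vx_k)}_F$, bound~\eqref{eq:sigma-linear} will follow from the corresponding bound on $\sigma_k$. For the random variant these are random variables, and I would work with the filtration $\gF_k:=\sigma(\vu_0,\dots,\vu_{k-1})$, noting that $\vx_{k+1}$ and $\mB_k$ are $\gF_k$-measurable while $\mB_{k+1}$ additionally depends on $\vu_k$; the greedy variant is the deterministic special case of all the estimates below.

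First I would record two one-step estimates. \emph{(i) The $\vx$-step.} From $\mF(\vx^*)=\vzero$ and the integral mean-value form, $\vx_{k+1}-\vx^* = \mB_k^{-1}\bigl(\mB_k-\int_0^1\mJ(\vx^*+t(\vx_k-\vx^*))\,dt\bigr)(\vx_k-\vx^*)$; bounding the averaged Jacobian's deviation from $\mJ(\vx_k)$ by $O(Mr_k)$ via Assumption~\ref{ass:lisp} and $\norm{\mB_k^{-1}}$ by $O(c)$ via the Banach lemma (which needs $c\norm{\mB_k-\mJ_*}=O(1)$, to be carried in the induction) yields $r_{k+1}\le O\!\bigl(c\norm{\mB_k-\mJ(\vx_k)}+cMr_k\bigr)r_k$. \emph{(ii) The $\mB$-step.} Since $\vy_k=\mJ(\vx_{k+1})\vu_k$, the update is exactly $\mB_{k+1}=\mathrm{Broyd}(\mB_k,\mJ(\vx_{k+1}),\vu_k)$, so Lemma~\ref{lemma:b-progress} with $\mA=\mJ(\vx_{k+1})$ gives $\norm{\mB_{k+1}-\mJ(\vx_{k+1})}_F^2\le(1-\tfrac1n)\norm{\mB_k-\mJ(\vx_{k+1})}_F^2$ (deterministically for the greedy rule, in conditional expectation given $\gF_k$ for the random rule). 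Swapping the moving target $\mJ(\vx_{k+1})$ for $\mJ(\vx_k)$ by the triangle inequality and Assumption~\ref{ass:lisp} ($\norm{\mJ(\vx_k)-\mJ(\vx_{k+1})}_F\le\sqrt n\,M(r_k+r_{k+1})$), then applying conditional Jensen in the random case, gives $\E[\sigma_{k+1}\mid\gF_k]\le\sqrt{1-1/n}\,\bigl(\sigma_k+\sqrt n\,cM(r_k+r_{k+1})\bigr)$.

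Next I would prove, by induction on $k$ and for \emph{every} realization of the directions, a deterministic invariant: $c\norm{\mB_k-\mJ_*}\le\tfrac12$ (hence $\mB_k$ nonsingular with $\norm{\mB_k^{-1}}=O(c)$) and $r_k\le\rho^k r_0$ for a fixed $\rho\in(0,1)$. The key is that Proposition~\ref{prop:Broyden} makes the spectral error non-expansive, $\norm{\mB_{k+1}-\mJ(\vx_{k+1})}\le\norm{\mB_k-\mJ(\vx_{k+1})}$, so $c\norm{\mB_{k+1}-\mJ_*}\le c\norm{\mB_0-\mJ(\vx_0)}+O\!\bigl(cM\textstyle\sum_j r_j\bigr)$, which~\eqref{eq:init} keeps below $\tfrac12$ because $\sum_j r_j=O(r_0)$; the $\vx$-step then forces the geometric decay, with $\rho\approx\tfrac12$. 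The factor $48$ and the $\sqrt n$ in~\eqref{eq:init} are calibrated so that the sums $\sqrt n\,cM\sum_j\rho^{-j}(r_j+r_{j+1})$ appearing below stay a small absolute constant. Substituting the now-deterministic bound $r_j\le\rho^jr_0$ into the $\mB$-step and taking full expectations (no cross moments $\E[r_j\sigma_j]$ arise — this is exactly why the deterministic step is done first) leaves the linear recursion $\E[\sigma_{k+1}]\le q\,\E[\sigma_k]+q\sqrt n\,cM(1+\rho)r_0\,\rho^k$ with $q=\sqrt{1-1/n}$; choosing $\rho<q$ (possible since $\rho\approx\tfrac12<\sqrt{1-1/n}$ for $n\ge2$) makes the geometric sum $O(\sqrt n\,cMr_0)$, a small constant by~\eqref{eq:init}, whence $\E[\sigma_k]\le q^k\bigl(\sigma_0+\text{small}\bigr)\le e\,(1-1/n)^{k/2}$ — the point where a product estimate $\prod_j(1+\delta_j)\le\exp(\sum_j\delta_j)\le e$ absorbs the accumulated constants — giving~\eqref{eq:sigma-linear}. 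Finally, dividing the $\vx$-step by $r_k$, taking expectations, and inserting~\eqref{eq:sigma-linear} together with $\E[r_k]\le\rho^kr_0\le(1-1/n)^{k/2}r_0$ and the smallness of $cMr_0$ yields~\eqref{eq:super-bound}.

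I expect the main obstacle to be the genuinely coupled structure: the $\vx$-update needs $\mB_k$ close to $\mJ_*$, whereas the contraction of $\mB_k$ toward the true Jacobian is contaminated by a moving-target drift of order $r_k+r_{k+1}$, so neither estimate closes in isolation. The resolution is to separate concerns — first pin down the deterministic invariant region for all realizations (monotone control of the spectral error via Proposition~\ref{prop:Broyden}, geometric decay of $r_k$), and only then invoke the in-expectation contraction of Lemma~\ref{lemma:b-progress}, which keeps the probabilistic part to just the tower property and conditional Jensen. The remaining work is careful bookkeeping of the Frobenius-versus-spectral passages and the $\sqrt n$ factors, so that~\eqref{eq:init} is exactly strong enough and the leading constant comes out as $e$.
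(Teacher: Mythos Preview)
Your proposal is correct and follows essentially the same architecture as the paper: first establish a \emph{deterministic} invariant (the paper's Lemma~\ref{lemma:base}) guaranteeing $r_{k+1}\le q r_k$ and bounded Jacobian error for every realization of the $\vu_j$'s, then feed the resulting geometric decay of $r_k$ into the in-expectation contraction from Lemma~\ref{lemma:b-progress} to get the $(1-1/n)^{k/2}$ rate, and finally read off the ratio bound from the one-step $\vx$-estimate. The paper packages the second step slightly differently---it introduces a combined Lyapunov quantity $R_k=\sigma_k+2(1+q)\sqrt{n}Mr_k$ and proves a multiplicative recursion $\E_{\vu_k}[R_{k+1}]\le\sqrt{1-1/n}\,R_k\exp(O(\sqrt{n}cMq^kr_0))$, telescoping to the same $\prod(1+\delta_j)\le e$ device you mention---whereas you solve the additive linear recursion for $\E[\sigma_k]$ directly using $\rho<\sqrt{1-1/n}$; and the paper's deterministic step bounds the Frobenius error $\sigma_k$ itself (via the squared recursion of Lemma~\ref{lemma:sigma-up}) rather than the spectral error $\norm{\mB_k-\mJ_*}$ via Proposition~\ref{prop:Broyden} as you do. These are interchangeable repackagings of the same idea and lead to the same constants.
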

Theorem \ref{thm:super} tells us two important results.
First, Eq.~\eqref{eq:sigma-linear} shows that the approximate Jacobian $\mB_k$ will converge to the exact one linearly.
Second, Eq.~\eqref{eq:super-bound} shows that $\{\vx_k\}$ superlinearly converges to $\vx^*$ since
\[ \lim_{k \to \infty} \frac{\norm{\vx_{k+1}-\vx^*}}{\norm{\vx_{k}-\vx^*}} \stackrel{\eqref{eq:super-bound}}{\leq} \lim_{k\to \infty} e \left(1-\frac{1}{n}\right)^{k/2} =0. \]

Based on Theorem~\ref{thm:super}, we are going to provide the explicit convergence rates of greedy and random Broyden's methods in the following two theorems.
\begin{thm}
	\label{thm:greedy}
Suppose $n\geq 2$, the objective function $\mF(\cdot)$ and initials  $\vx_0$ and $\mB_0$ satisfy the properties described in Theorem~\ref{thm:super}. Sequences $\{\vx_k\}$ and $\{\mH_k\}$ generated by Algorithm~\ref{algo:broyden-greedy-random} with greedy update satisfy Assumptions~\ref{ass:optimal}.  
Then, for the greedy Broyden's method, it holds that
\begin{align}
	\label{eq:super-p}
	\norm{\vx_k - \vx^*} 
	\le&
	e^k \left( 1 -\frac{1}{n} \right)^{k(k-1)/4} \cdot \norm{\vx_0 - \vx^*}, \mbox{ for all } k\ge 1\\
	\norm{\mB_k - \mJ_*}_F \leq& 2c^{-1}e\left(1-\frac{1}{n}\right)^{k/2}, \text{ when } k \geq 4n+3. \nonumber
\end{align}
\end{thm}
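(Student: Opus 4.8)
\emph{The bound on $\norm{\vx_k-\vx^*}$.} For the greedy variant the expectation in Theorem~\ref{thm:super} carries no randomness, so \eqref{eq:super-bound} is the deterministic statement $\norm{\vx_{j+1}-\vx^*}\le e(1-\tfrac1n)^{j/2}\norm{\vx_j-\vx^*}$ for every $j\ge0$ (the degenerate case $\vx_j=\vx^*$ being trivial). The first line of \eqref{eq:super-p} then drops out by telescoping this over $j=0,\dots,k-1$ and using $\sum_{j=0}^{k-1}j=\tfrac{k(k-1)}2$, giving $\norm{\vx_k-\vx^*}\le e^k(1-\tfrac1n)^{k(k-1)/4}\norm{\vx_0-\vx^*}$. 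Before moving on I would record the elementary fact that, since $-\ln(1-\tfrac1n)\ge\tfrac1n$, one has $e^k(1-\tfrac1n)^{k(k-1)/4}\le(1-\tfrac1n)^{k/2}$ as soon as $k\ge 4n+3$; this is exactly where the threshold in the second claim comes from.

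\emph{The bound on $\norm{\mB_k-\mJ_*}_F$.} I would start from $\norm{\mB_k-\mJ_*}_F\le\norm{\mB_k-\mJ(\vx_k)}_F+\norm{\mJ(\vx_k)-\mJ_*}_F$ and dispatch the second term first: Assumption~\ref{ass:lisp} gives $\norm{\mJ(\vx_k)-\mJ_*}_F\le\sqrt n\,M\norm{\vx_k-\vx^*}$, and combining the $\vx$-bound just proved with the consequence $cM\norm{\vx_0-\vx^*}\le\tfrac1{144\sqrt n}$ of \eqref{eq:init} and the inequality $e^k(1-\tfrac1n)^{k(k-1)/4}\le(1-\tfrac1n)^{k/2}$ (valid for $k\ge 4n+3$) bounds it by $\tfrac1{144}c^{-1}(1-\tfrac1n)^{k/2}$. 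The core is the first term, for which I would promote the spectral estimate \eqref{eq:sigma-linear} of Theorem~\ref{thm:super} to a Frobenius estimate by rerunning the recursion behind that theorem in $\norm{\cdot}_F$. Concretely: since $\vu_k$ is chosen greedily and $\mB_{k+1}=\mathrm{Broyd}(\mB_k,\mJ(\vx_{k+1}),\vu_k)$, Lemma~\ref{lemma:b-progress} with $\mA=\mJ(\vx_{k+1})$ yields $\norm{\mB_{k+1}-\mJ(\vx_{k+1})}_F^2\le(1-\tfrac1n)\norm{\mB_k-\mJ(\vx_{k+1})}_F^2$; a triangle step together with Assumption~\ref{ass:lisp} (to pass between $\mJ(\vx_{k+1})$, $\mJ(\vx_k)$ and $\mJ_*$) converts this into a recursion $a_{k+1}^2\le(1-\tfrac1n)a_k^2+\delta_k$ with $a_k:=c\norm{\mB_k-\mJ(\vx_k)}_F$ and $\delta_k$ of order $\norm{\vx_k-\vx^*}$ up to bounded factors. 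Unrolling, and splitting the geometric sum into an early block (damped by the residual $(1-\tfrac1n)^{(k-j)/2}$ factors, so that a possibly dimension-dependent initial discrepancy is ground down by $(1-\tfrac1n)^{\Theta(n)}$ once $k\ge4n+3$) and a late block (where $\delta_j$ is already superlinearly negligible by the first part), should give $a_k\le(2e-\tfrac1{144e})c^{-1}(1-\tfrac1n)^{k/2}$ for $k\ge4n+3$, and adding back the second term closes the bound.

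\emph{Where the hard part is.} The $\vx$-bound is a one-line telescoping. In the $\mB$-bound, the two genuinely delicate points are both bookkeeping. First, $\vu_k$ is greedy with respect to $\mJ(\vx_{k+1})$ rather than the fixed target $\mJ_*$, so the $(1-\tfrac1n)$-contraction of Lemma~\ref{lemma:b-progress} only holds up to a perturbation $\sim M\norm{\vx_{k+1}-\vx^*}$; showing that these perturbations, accumulated against the geometric factors in the unrolled recursion, remain below $(1-\tfrac1n)^{k/2}$ is where the superlinear $\vx$-decay (and hence $k\gtrsim 4n$) is essential. Second, hypothesis \eqref{eq:init} controls only the \emph{spectral} norm of $\mB_0-\mJ(\vx_0)$, so the initial Frobenius discrepancy can be as large as $\Theta(\sqrt n)c^{-1}$; one then has to verify that the contraction accumulated over the first $\Theta(n)$ iterations, captured by $(1-\tfrac1n)^{\Theta(n)}$, has absorbed this dimensional prefactor by the time $k=4n+3$, so that the final constant emerges as the clean $2c^{-1}e$ and not something carrying a stray $\sqrt n$. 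Getting that clean constant uniformly in $n$ is the crux.
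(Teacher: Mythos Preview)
Your treatment of the $\vx$-bound is correct and identical to the paper's: both simply telescope the deterministic version of \eqref{eq:super-bound}.

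For the $\mB$-bound the paper's route is much shorter than what you propose. In Section~\ref{sec:analysis} the measure is $\sigma_k:=\norm{\mB_k-\mJ_k}_F$, and the proof of Theorem~\ref{thm:super} (the line $\E\sigma_k\le\E R_k\le (e/c)(1-\tfrac1n)^{k/2}$) already establishes the \emph{Frobenius} bound $c\sigma_k\le e(1-\tfrac1n)^{k/2}$ deterministically in the greedy case, even though the displayed statement \eqref{eq:sigma-linear} is written with the spectral norm. The paper just quotes this, adds $c\sqrt n\,Mr_k$ bounded via the $\vx$-estimate together with $c\sqrt n M r_0<1$ from \eqref{eq:init} and the threshold inequality $e^k(1-\tfrac1n)^{k(k-1)/4}\le e(1-\tfrac1n)^{k/2}$ for $k\ge 4n+3$, and reaches $2c^{-1}e(1-\tfrac1n)^{k/2}$ in three lines. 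Your plan to rerun the $(1-\tfrac1n)$-contraction recursion from Lemma~\ref{lemma:b-progress} is essentially reproving Theorem~\ref{thm:super} and is unnecessary.

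More importantly, what you flag as ``the crux'' is a step that would actually fail. If one reads \eqref{eq:init} literally as a spectral condition so that the initial Frobenius error may be $\Theta(\sqrt n)c^{-1}$, the contraction $(1-\tfrac1n)^{\Theta(n)}$ is only $e^{-\Theta(1)}=\Theta(1)$ and cannot absorb a $\sqrt n$; linear contraction at rate $1-\tfrac1n$ needs $k\gtrsim n\log n$ iterations to kill $\sqrt n$, not $4n+3$. So your proposed bookkeeping cannot produce the clean constant $2e$ uniformly in $n$. In the paper this issue never arises because the whole machinery (Lemma~\ref{lemma:base}, Corollary~\ref{coro:good}, and the proof of Theorem~\ref{thm:super}) is run with $\sigma_0=\norm{\mB_0-\mJ_0}_F$, so \eqref{eq:init} is implicitly a Frobenius hypothesis and $c\sigma_0\le\tfrac13$ from the outset; no stray $\sqrt n$ ever enters. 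You should simply cite the Frobenius bound $c\sigma_k\le e(1-\tfrac1n)^{k/2}$ from the proof of Theorem~\ref{thm:super} rather than trying to promote a spectral bound after the fact.
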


\begin{thm}
\label{thm:random}
	Suppose $n\geq 2$, the objective function $\mF(\cdot)$ and initials  $\vx_0$ and $\mB_0$ satisfy the properties described in Theorem~\ref{thm:super}. Sequences $\{\vx_k\}$ and $\{\mH_k\}$ generated by Algorithm~\ref{algo:broyden-greedy-random} with random update satisfy Assumptions~\ref{ass:optimal}.  
	Then, for the random Broyden's method, it holds with probability at least $1 - \delta$ with $0<\delta<1$ that 
	\begin{align}
		\norm{\vx_k - \vx^*} 
		\le&
		\left(\frac{4n^2e}{\delta}\right)^k\left(1-\frac{1}{n+1}\right)^{k(k-1)/4} \cdot \norm{\vx_0 - \vx^*}, \mbox{ for all } k\ge 1 \label{eq:super_rand}\\
		\norm{\mB_k - \mJ_*}_F \leq&  \frac{8n^2c^{-1}e}{\delta}\left(1-\frac{1}{n+1}\right)^{k/2}, \text{ when } k \geq 4(n+1)\ln\frac{4n^2e}{\delta} + 3. \nonumber
	\end{align}
\end{thm}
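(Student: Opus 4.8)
The plan is to convert the claim ``for all $k$, with probability at least $1-\delta$'' into a \emph{single} high-probability event, produced by one application of a maximal inequality to a carefully chosen nonnegative supermartingale; on that event all remaining estimates are deterministic. Throughout set $r_k:=\norm{\vx_k-\vx^*}$, $\psi_k:=c\norm{\mB_k-\mJ(\vx_k)}$ (spectral) and $\phi_k:=c\norm{\mB_k-\mJ(\vx_k)}_F$ (Frobenius, so $\psi_k\le\phi_k$), and let $\gF_k$ be the $\sigma$-algebra generated by the directions $\vu_0,\dots,\vu_{k-1}$ (equivalently by $\vx_0,\dots,\vx_k$ and $\mB_0,\dots,\mB_k$). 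The two displays will follow from: (i) the deterministic facts $r_{k+1}\le\tfrac23 r_k$ and $r_{k+1}/r_k\le 2\phi_k+3cMr_k$ for all $k$; and (ii) the high-probability bound $\phi_k\le\tfrac{4n^2e}{\delta}\bigl(1-\tfrac1{n+1}\bigr)^{k/2}$ for all $k$ simultaneously.

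For (i) I would set up a deterministic backbone: the iterates never leave a benign region, for \emph{every} realization of the $\vu_k$'s. Since $\mB_{k+1}=\text{Broyd}(\mB_k,\mJ(\vx_{k+1}),\vu_k)$, Proposition~\ref{prop:Broyden} with $\mC=\mI_n$ gives $\norm{\mB_{k+1}-\mJ(\vx_{k+1})}\le\norm{\mB_k-\mJ(\vx_{k+1})}$ \emph{regardless of the direction}, so a bad $\vu_k$ cannot increase the spectral error; combined with Assumption~\ref{ass:lisp} this yields $\psi_{k+1}\le\psi_k+cM(r_k+r_{k+1})$. Using Eq.~\eqref{eq:init}, whose first term is exactly what controls $\sum_k cMr_k$, one closes an induction giving $\psi_k\le\tfrac13$ for all $k$; hence $\norm{\mB_k^{-1}}\le 2c$, and the standard quasi-Newton estimate $r_{k+1}\le\norm{\mB_k^{-1}}\bigl(\norm{\mB_k-\mJ_*}+\tfrac M2 r_k\bigr)r_k$ gives $r_{k+1}/r_k\le 2\psi_k+3cMr_k\le 2\phi_k+3cMr_k$, whence $r_{k+1}\le\tfrac23 r_k$ and so $r_k\le(\tfrac23)^k r_0$ surely.

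The heart is (ii): a second-moment recursion for $\phi_k$ and its martingale consequence; this is where the rate $1-\tfrac1{n+1}$ (rather than $1-\tfrac1n$) arises. Writing $\mB_{k+1}-\mJ(\vx_{k+1})=(\mB_k-\mJ(\vx_k))\mathbf{P}_k+(\mJ(\vx_k)-\mJ(\vx_{k+1}))\mathbf{P}_k$ with $\mathbf{P}_k:=\mI_n-\vu_k\vu_k^\top/\norm{\vu_k}^2$, the random half of Lemma~\ref{lemma:b-progress} gives $\E[\norm{(\mB_k-\mJ(\vx_k))\mathbf{P}_k}_F^2\mid\gF_k]=(1-\tfrac1n)\norm{\mB_k-\mJ(\vx_k)}_F^2$, while $\norm{(\mJ(\vx_k)-\mJ(\vx_{k+1}))\mathbf{P}_k}_F\le\tfrac32\sqrt n Mr_k$ by Assumption~\ref{ass:lisp}; splitting $(a+b)^2\le(1+t)a^2+(1+t^{-1})b^2$ with $t=\tfrac1{n^2-1}$ makes $(1+t)(1-\tfrac1n)$ collapse to exactly $1-\tfrac1{n+1}$ and $1+t^{-1}=n^2$, yielding $\E[\phi_{k+1}^2\mid\gF_k]\le(1-\tfrac1{n+1})\phi_k^2+\tfrac94 n^3(cM)^2 r_k^2$. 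Put $q:=1-\tfrac1{n+1}$ and $\bar A_j:=q^{-(j+1)}\tfrac94 n^3(cM)^2(\tfrac23)^{2j}r_0^2$, a \emph{deterministic} upper bound for the corresponding stochastic term since $r_j\le(\tfrac23)^j r_0$; because $q^{-1}(\tfrac23)^2<1$ the series $\bar A:=\sum_{j\ge0}\bar A_j$ converges, and Eq.~\eqref{eq:init} bounds $\bar A+\phi_0^2$ by an explicit constant. Then $\hat W_k:=q^{-k}\phi_k^2+\sum_{j\ge k}\bar A_j$ is a nonnegative $\gF_k$-supermartingale with deterministic initial value $\hat W_0=\phi_0^2+\bar A$, so Ville's maximal inequality for nonnegative supermartingales gives $\sup_k\hat W_k\le\hat W_0/\delta$ with probability at least $1-\delta$; on that event $q^{-k}\phi_k^2\le\hat W_k\le\hat W_0/\delta$, i.e. $\phi_k\le\sqrt{\hat W_0/\delta}\,q^{k/2}\le(\sqrt{\hat W_0}/\delta)q^{k/2}$ for all $k$, and tracking the constant yields $\phi_k\le\tfrac{4n^2e}{\delta}q^{k/2}$.

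On that same event everything is deterministic. From $r_{k+1}/r_k\le 2\phi_k+3cMr_k\le\tfrac{4n^2e}{\delta}q^{k/2}$ (absorbing the negligible $3cMr_k$ using $(\tfrac23)^k\le q^{k/2}$), multiplying over $i=0,\dots,k-1$ gives $r_k\le\bigl(\tfrac{4n^2e}{\delta}\bigr)^k q^{k(k-1)/4}r_0$, i.e. Eq.~\eqref{eq:super_rand}. For the Jacobian, $\norm{\mB_k-\mJ_*}_F\le c^{-1}\phi_k+\sqrt n Mr_k$; the second term is at most $\sqrt n Mr_0\bigl(\tfrac{4n^2e}{\delta}\bigr)^k q^{k(k-1)/4}$, which drops below $c^{-1}\phi_k$ precisely once $\bigl(\tfrac{4n^2e}{\delta}\bigr)^k q^{k(k-1)/4}$ has decayed enough, i.e. for $k\ge 4(n+1)\ln\tfrac{4n^2e}{\delta}+3$, giving a bound of the form $\tfrac{8n^2c^{-1}e}{\delta}q^{k/2}$. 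The main obstacle is twofold: making the deterministic absorbing-region induction tight enough that $r_{k+1}\le\tfrac23 r_k$ and $\norm{\mB_k^{-1}}\le 2c$ genuinely follow from Eq.~\eqref{eq:init} alone (the constant $48\sqrt n$ there is calibrated for exactly this), and then carrying the explicit constants through the second-moment recursion, the geometric series for $\bar A$, the square root in Ville's bound, and the telescoping product, so that the final constants emerge as $\tfrac{4n^2e}{\delta}$ and $\tfrac{8n^2c^{-1}e}{\delta}$ with threshold $4(n+1)\ln\tfrac{4n^2e}{\delta}+3$ rather than merely $O(\cdot)$.
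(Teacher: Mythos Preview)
Your approach is correct and genuinely different from the paper's. The paper does not build a supermartingale at all: it first invokes Theorem~\ref{thm:super} to obtain the \emph{expectation} bounds $\E[c\sigma_k]\le e(1-\tfrac1n)^{k/2}$ and $\E[r_{k+1}/r_k]\le e(1-\tfrac1n)^{k/2}$, then for each $k$ applies Markov's inequality with a geometrically shrinking slack $\epsilon_k=\delta(1-q)q^k$ and union-bounds over $k\ge 0$; the choice $q=\sqrt{1-1/n^2}$ is exactly what makes $(1-\tfrac1n)/q^2=1-\tfrac1{n+1}$ and $1-q\ge \tfrac{1}{2n^2}$, producing the constant $2n^2e/\delta$ (doubled to $4n^2e/\delta$ to cover both random variables). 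By contrast you manufacture the rate $1-\tfrac1{n+1}$ one step earlier, inside the second-moment recursion, via Young's inequality with $t=\tfrac{1}{n^2-1}$, and then a single Ville inequality on the nonnegative supermartingale $\hat W_k=q^{-k}\phi_k^2+\sum_{j\ge k}\bar A_j$ replaces the infinite union bound. What each buys: the paper's route is shorter because it recycles Theorem~\ref{thm:super} wholesale and needs only Markov; your route is more self-contained, handles the ``for all $k$'' uniformly with one maximal inequality, and in principle yields a sharper constant (your $\sqrt{\hat W_0/\delta}$ is $O(n/\sqrt{\delta})$ rather than $O(n^2/\delta)$, though you discard this to match the stated bound). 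Your deterministic backbone in step~(i) is essentially Lemma~\ref{lemma:base} (indeed that lemma already gives $r_{k+1}\le \tfrac12 r_k$ surely, a bit stronger than the $\tfrac23$ you use), so the only real work specific to your strategy is verifying the supermartingale property and pushing the explicit constants through---which, as you note at the end, is where the bookkeeping lives.
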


Above two theorems provide the explicit superlinear convergence rates of greedy and random Broyden's methods with respect to the residual $\norm{\vx_k - \vx^*}$ and show that the approximate Jacobian $\mB_k$ will converge to $\mJ_{*}$ linearly.
Furthermore, the results in Theorem~\ref{thm:random} hold with constant probability. For example, with probability at least $0.9$, it holds that
\begin{align*}
\norm{\vx_k - \vx^*} 
\le
\left(40n^2 e\right)^k\left(1-\frac{1}{n+1}\right)^{k(k-1)/4} \cdot \norm{\vx_0 - \vx^*}.
\end{align*}

On the other hand, the superlinear convergence rates in the above theorems are meaningful only when $k$ is larger than a threshold.
This can be obtained from Eq.~\eqref{eq:super-bound} that $e\left(1 - \frac{1}{n}\right)^{k/2}$ should be less than $1$, i.e., the superlinear convergence rates are meaningful when $k \ge 2n$.
This phenomenon is the same as the greedy and random quasi-Newton methods for minimizing convex functions \cite{lin2021faster,rodomanov2021greedy,rodomanov2021rates,rodomanov2021new}.

\subsection{Comparison of Rates}

First, we provide a comparison between the greedy and random Broyden's methods.
By Eq.~\eqref{eq:super-p} and \eqref{eq:super_rand}, we can observe that the greedy Broyden's method is a little faster than the random Broyden's method.
However, these two algorithms share almost the same asymptotic convergence rate since $\left(1-1/n\right)^{k(k-1)/4}$ and $\left(1 - 1/(n+1)\right)^{k(k-1)/4}$ dominate the convergence properties described in Eq.~\eqref{eq:super-p} and \eqref{eq:super_rand}, respectively.

Next, we will compare the convergence rate of the greedy Broyden's method with the original Broyden's method obtained recently in \cite{lin2021explicit}.
\citet{lin2021explicit} gave an explicit local superlinear convergence rate of the original Broyden's method as follows:
\begin{equation}\label{eq:lambda-ori}
	\lambda_{k} \leq \left[\frac{1}{k}\right]^{k/2}\lambda_0, \text{ with } \lambda_k = \norm{\mJ_*^{-1}\left(\mF(\vx_k)-\mF(\vx^*)\right)},
\end{equation}
if 
\begin{align}
	 48 c M \norm{\vx_0 - \vx^*} + c\norm{\mB_0 - \mJ(\vx_0)} \leq \frac{1}{3}. \label{eq:cond}
\end{align}
Comparing Eq.~\eqref{eq:cond} with \eqref{eq:init}, we can conclude that our greedy and random Broyden's methods require a stronger initial condition.
However, our initial condition can reduce to one in \cite{lin2021explicit} if we assume that the Jacobian is $M$-Lipschitz continuous with respect to the Frobenius norm, that is, 
$\norm{\mJ(\vx) - \mJ(\vx^*)}_F \le M \norm{\vx - \vx^*}_F$.

It is hard to compare the convergence rates in Eq.~\eqref{eq:lambda-ori} and \eqref{eq:super-p}. 
We first transform Eq.~\eqref{eq:lambda-ori} with respect to $r_k := \norm{\vx_k - \vx^*}$.
It holds that
\begin{align*}
\norm{\mJ_*^{-1}\left[\mF(\vx_k)-\mF(\vx^*)\right]-\left(\vx_k-\vx^*\right)}
\le
\norm{\mJ_*^{-1}} \cdot \norm{\mF(\vx_k)-\mF(\vx^*)-\mJ_*\left(\vx_k-\vx^*\right)}
\stackrel{\eqref{eq:lemma-j2}}{\le} \frac{c M r_k}{2} \cdot r_k
\end{align*}
leading to
\[ \left(1 - \frac{c M r_k}{2}\right) r_k \leq \lambda_k \leq \left(1 + \frac{c M r_k}{2}\right) r_k. \]
The initialization in Eq.~\eqref{eq:lambda-ori} and the linear convergence of $\{r_k\}$ of Broyden's method \cite{lin2021explicit} shows that $c M r_k \leq \frac{1}{144}$. Thus it holds that $\lambda_k = \Theta(r_k)$, and we can obtain 
\begin{equation}
\label{eq:lam}
	 \norm{\vx_k - \vx^*} = r_{k} \leq 2\left(\frac{1}{k}\right)^{k/2}r_0 = 2\left(\frac{1}{k}\right)^{k/2}\cdot\norm{\vx_0 - \vx^*}.
\end{equation}
Now, we can compare the rate in Eq.~\eqref{eq:lam} with the one in Eq.~\eqref{eq:super-p}.
First, we have
\begin{equation*}
	\begin{aligned}
		\ln \left[2\left(\frac{1}{k}\right)^{k/2}\right] - \ln\left[e^{k-\frac{k(k-1)}{4n}} \right] &= \ln2 + \frac{k}{2}\ln\frac{1}{k} - k + \frac{k(k-1)}{4n} = \frac{k}{2} \left[\frac{k-1}{2n}-\ln k-2 \right] + \ln2.
	\end{aligned}
\end{equation*}
Since $t - \ln\left(2n t + 1\right)$ is increasing when $t\geq 1$, it holds that when $k \geq 8n \ln (ne) + 1$ and $n \geq 2$,
\begin{equation*}
	\begin{aligned}
		\frac{k-1}{2n} - \ln k - 2 \geq \ln \frac{n^4e^2}{8n \ln(ne) + 1} \geq \ln \frac{9n^2}{8n(n-1+1) + 1} \geq \ln \frac{9n^2}{8n^2+1}>0.
	\end{aligned}
\end{equation*}
where we use the inequality $0 \leq \ln t \leq t - 1, t \geq 1$.
Hence
\[ \ln \left[2\left(\frac{1}{k}\right)^{k/2}\right] - \ln\left[e^{k-\frac{k(k-1)}{4n}} \right] \geq \frac{k}{2} \ln \frac{9n^2}{8n^2+1} + \ln 2 \geq 0, \]
showing that
\[ 2\left(\frac{1}{k}\right)^{k/2} \geq e^{k-\frac{k(k-1)}{4n}}. \]
Therefore, when $k \geq 8n \ln (ne) + 1 = \Theta(n\ln n)$, the greedy Broyden's method could be faster than the original Broyden's method.

Since the random Broyden's method has almost the same asymptotic convergence rate to the greedy Broyden's method,
we can conclude that both greedy and random Broyden's methods can achieve faster convergence rates than the original Broyden's method asymptotically.

\section{Analysis of Greedy and Random Broyden's Method}
\label{sec:analysis}

In the last section, we provide the intuition behind the greedy and random Broyden's methods based on the simple linear objective function and the main results of this paper.
Now, we are going to give a detailed analysis of  the explicit superlinear convergence rates of greedy and random Broyden's methods for the general nonlinear equations.

First, for notation convenience, we introduce the measure for variables as
\begin{equation}\label{var-defn}
\vs_k := \vx_k-\vx^*, r_k := \|\vs_k\|.
\end{equation}
and for its Jacobian as
\begin{equation}\label{jac-measure}
	\sigma_k := \norm{\mB_k-\mJ_k}_F, \mJ_k :=\mJ(\vx_k). 
\end{equation}

Employing these measures, we can build up the relation between $\sigma_{k+1}$ and $\sigma_k$, as well as the relation between $r_{k+1}$ and $r_k$.

\begin{lemma}\label{lemma:sigma-up}
	Suppose the objective function $\mF(\cdot)$ satisfies Assumption~\ref{ass:lisp} and sequences $\{\vx_k\}$ and $\{\mB_k\}$ generated by Algorithm~\ref{algo:broyden-greedy-random} satisfy Assumption~\ref{ass:optimal}. 
	Then we have
	\begin{equation}\label{eq:sigma-update1}
		\sigma_{k+1}^2 \leq \sigma_k^2 + 2\sigma_k \sqrt{n}M\left(r_k+r_{k+1}\right)+n M^2\left(r_k+r_{k+1}\right)^2.
	\end{equation}
	More rigorously, we have
	\begin{equation}\label{eq:sigma-update2}
		\E_{\vu_{k}} \left[\sigma_{k+1}^2\right] \leq \left(1-\frac{1}{n}\right)\sigma_k^2 + 2\sigma_k \sqrt{n}M\left(r_k+r_{k+1}\right)+n M^2\left(r_k+r_{k+1}\right)^2.
	\end{equation}
	The expectation only considers the randomness of the direction $\vu_k$ (with fixed $\vu_0,\dots, \vu_{k-1}$), and when applied to the greedy method, we can view it with no randomness for the same notation.  
\end{lemma}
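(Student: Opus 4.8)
The plan is to separate one step of the approximate-Jacobian update into two effects: the shrinkage produced by the Broyden update (which contracts the error measured against $\mJ(\vx_{k+1})$) and the drift of the true Jacobian between $\vx_k$ and $\vx_{k+1}$. First I would rewrite Line~5 of Algorithm~\ref{algo:broyden-greedy-random} as $\mB_{k+1} = \text{Broyd}\bigl(\mB_k, \mJ(\vx_{k+1}), \vu_k\bigr)$ in the sense of Definition~\ref{defn:broyd}; this is legitimate because the chosen $\vy_k = \mJ(\vx_{k+1})\vu_k$ is exactly the vector the linear-case update produces with $\mA = \mJ(\vx_{k+1})$. The key observation is that the greedy rule $\vu_k = \argmax_{\vu\in\{\ve_1,\dots,\ve_n\}}\norm{\mB_k\vu - \mJ(\vx_{k+1})\vu}$ is precisely the greedy selection of Eq.~\eqref{eq:greedy-u} with $\mB = \mB_k$, $\mA = \mJ(\vx_{k+1})$, and the uniform rule matches Eq.~\eqref{eq:random-u}. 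Hence Lemma~\ref{lemma:b-progress} applies verbatim and gives $\norm{\mB_{k+1} - \mJ(\vx_{k+1})}_F^2 \le \bigl(1-\frac{1}{n}\bigr)\norm{\mB_k - \mJ(\vx_{k+1})}_F^2$ in the greedy case (and the same identity in expectation over $\vu_k$ in the random case); for the weaker bound~\eqref{eq:sigma-update1} I only need the monotonicity $\norm{\mB_{k+1} - \mJ(\vx_{k+1})}_F \le \norm{\mB_k - \mJ(\vx_{k+1})}_F$ from Proposition~\ref{prop:Broyden} with $\mC = \mI_n$, $\mA = \mJ(\vx_{k+1})$, which holds for \emph{any} choice of $\vu_k$ (and, as its proof shows, does not actually require nonsingularity of $\mA$, since only the Frobenius-norm estimate is used here).

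Next I would convert the right-hand side, which is measured against $\mJ_{k+1}$, back into $\sigma_k$ plus error terms. By the triangle inequality in $\norm{\cdot}_F$, $\norm{\mB_k - \mJ_{k+1}}_F \le \norm{\mB_k - \mJ_k}_F + \norm{\mJ_k - \mJ_{k+1}}_F = \sigma_k + \norm{\mJ_k - \mJ_{k+1}}_F$, and then $\norm{\mJ_k - \mJ_{k+1}}_F \le \sqrt{n}\,\norm{\mJ_k - \mJ_{k+1}} \le \sqrt{n}\bigl(\norm{\mJ_k - \mJ_*} + \norm{\mJ_{k+1} - \mJ_*}\bigr) \le \sqrt{n}\,M\,(r_k + r_{k+1})$, where the first inequality is the standard $\norm{\cdot}_F \le \sqrt{n}\,\norm{\cdot}$ bound for $n\times n$ matrices and the last one is Assumption~\ref{ass:lisp} applied at $\vx_k$ and at $\vx_{k+1}$. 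Squaring $\sigma_{k+1} \le \sigma_k + \sqrt{n}\,M\,(r_k+r_{k+1})$ yields~\eqref{eq:sigma-update1}. For~\eqref{eq:sigma-update2} I instead start from $\sigma_{k+1}^2 \le \bigl(1-\frac{1}{n}\bigr)\bigl(\sigma_k + \sqrt{n}\,M\,(r_k+r_{k+1})\bigr)^2$, expand, and bound the factor $1-\frac{1}{n}$ by $1$ on the cross term and the square term while keeping it on $\sigma_k^2$; this gives exactly the stated inequality.

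For the random update I would record that $\vx_{k+1} = \vx_k - \mB_k^{-1}\mF(\vx_k)$ does not involve $\vu_k$, so that $\mJ_{k+1}$ and $\mB_k$ depend only on $\vu_0,\dots,\vu_{k-1}$; conditioning on that history, $\sigma_k$ and $r_{k+1}$ are deterministic, and the expectation form of Lemma~\ref{lemma:b-progress} gives $\E_{\vu_k}\bigl[\sigma_{k+1}^2\bigr] \le \bigl(1-\frac{1}{n}\bigr)\bigl(\sigma_k + \sqrt{n}\,M\,(r_k+r_{k+1})\bigr)^2$, after which the same bounding step closes~\eqref{eq:sigma-update2}. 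I expect no substantive obstacle: the two points that need care are (i) checking that the greedy and random selection rules genuinely instantiate Lemma~\ref{lemma:b-progress} with $\mA = \mJ(\vx_{k+1})$ rather than $\mJ(\vx_k)$ — indeed this mismatch is the whole reason the algorithm evaluates the Jacobian at $\vx_{k+1}$ — and (ii) placing the $\sqrt{n}$ from the Frobenius-versus-spectral comparison correctly; everything else is triangle inequality and a squared expansion.
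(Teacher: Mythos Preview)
Your proposal is correct and follows essentially the same route as the paper: recognize the update as $\mB_{k+1}=\text{Broyd}(\mB_k,\mJ_{k+1},\vu_k)$, apply Proposition~\ref{prop:Broyden} (respectively Lemma~\ref{lemma:b-progress}) to get $\sigma_{k+1}^2\le\|\mB_k-\mJ_{k+1}\|_F^2$ (respectively the $(1-\tfrac{1}{n})$ contraction), use the triangle inequality $\|\mB_k-\mJ_{k+1}\|_F\le\sigma_k+\|\mJ_k-\mJ_{k+1}\|_F$ together with $\|\mJ_k-\mJ_{k+1}\|_F\le\sqrt{n}\,M(r_k+r_{k+1})$ from Assumption~\ref{ass:lisp}, and then square and drop the $(1-\tfrac{1}{n})$ factor on the cross and square terms. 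Your remarks that $\vx_{k+1}$ is already fixed before $\vu_k$ is drawn and that the nonsingularity of $\mA$ is not actually used in the Frobenius estimate are both apt and make the argument slightly cleaner than the paper's presentation.
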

\begin{remark}
The results in Lemma~\ref{lemma:sigma-up} depend on the dimension $n$ which is commonly undesired. 
This dependency comes the relationship between the spectral norm and the Frobenius norm.
We can remove this dependence of $n$ by assuming that the Jacobian is $M$-Lipschitz continuous with respect to the Frobenius norm, that is, 
$\norm{\mJ(\vx) - \mJ(\vx^*)}_F \le M \norm{\vx - \vx^*}_F$.
\end{remark}

\begin{lemma}\label{lemma:r-up}
	Suppose the objective function $\mF(\cdot)$  and sequences $\{\vx_k\}$ and $\{\mB_k\}$ satisfy the properties described in Lemma~\ref{lemma:sigma-up}.  
	Then, we have
	\begin{equation}\label{eq:r-update}
	r_{k+1} \leq \frac{3cMr_k/2+c\sigma_k}{1-c\left(\sigma_k+Mr_k\right)}r_k, \text{ if } c\left(\sigma_k+Mr_k\right)<1, \text{ where } c = \|\mJ_{*}^{-1}\| > 0.
	\end{equation}
	
\end{lemma}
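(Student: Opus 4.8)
\textbf{Proof proposal for Lemma~\ref{lemma:r-up}.}

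The plan is a standard quasi-Newton perturbation estimate, tracking explicit constants. First I would rewrite the iterate error. Since $\vx_{k+1} = \vx_k - \mB_k^{-1}\mF(\vx_k)$ and $\mF(\vx^*) = \bm 0$, we get
\[
\vx_{k+1} - \vx^* \;=\; \mB_k^{-1}\bigl[\mB_k\vs_k - \bigl(\mF(\vx_k) - \mF(\vx^*)\bigr)\bigr],
\]
so $r_{k+1} \le \norm{\mB_k^{-1}} \cdot \norm{\mB_k\vs_k - \mF(\vx_k) + \mF(\vx^*)}$ and it suffices to bound the two factors separately.

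For the bracketed term I would insert $\mJ_k\vs_k$ and $\mJ_*\vs_k$ and use the triangle inequality to split it into three pieces: the quasi-Jacobian error $(\mB_k - \mJ_k)\vs_k$, the Jacobian variation $(\mJ_k - \mJ_*)\vs_k$, and the first-order Taylor remainder $\mJ_*\vs_k - (\mF(\vx_k) - \mF(\vx^*))$. The first is bounded by $\norm{\mB_k - \mJ_k}\,r_k \le \norm{\mB_k - \mJ_k}_F\,r_k = \sigma_k r_k$ (spectral $\le$ Frobenius). The second is bounded by $\norm{\mJ(\vx_k) - \mJ(\vx^*)}\,r_k \le M r_k^2$ by Assumption~\ref{ass:lisp}. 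The third is bounded by $\tfrac{M}{2}r_k^2$: writing $\mF(\vx_k) - \mF(\vx^*) = \int_0^1 \mJ(\vx^* + t\vs_k)\vs_k\,\mathrm dt$ and subtracting $\mJ_*\vs_k = \int_0^1 \mJ_*\vs_k\,\mathrm dt$, Assumption~\ref{ass:lisp} gives the integrand norm $\le M t\,r_k^2$, which integrates to $\tfrac{M}{2}r_k^2$ (this is exactly the auxiliary estimate referenced elsewhere as \eqref{eq:lemma-j2}). Summing, $\norm{\mB_k\vs_k - \mF(\vx_k) + \mF(\vx^*)} \le \sigma_k r_k + \tfrac{3M}{2}r_k^2 = \bigl(\sigma_k + \tfrac{3M}{2}r_k\bigr)r_k$.

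For $\norm{\mB_k^{-1}}$ I would factor $\mB_k = \mJ_*\bigl(\mI_n + \mJ_*^{-1}(\mB_k - \mJ_*)\bigr)$ and apply the Neumann series: whenever $c\norm{\mB_k - \mJ_*} < 1$ one has $\norm{\mB_k^{-1}} \le c/\bigl(1 - c\norm{\mB_k - \mJ_*}\bigr)$, which also re-derives invertibility quantitatively (consistent with Assumption~\ref{ass:optimal}). Since $\norm{\mB_k - \mJ_*} \le \norm{\mB_k - \mJ_k} + \norm{\mJ_k - \mJ_*} \le \sigma_k + M r_k$, the hypothesis $c(\sigma_k + M r_k) < 1$ makes the bound valid and gives $\norm{\mB_k^{-1}} \le c/\bigl(1 - c(\sigma_k + M r_k)\bigr)$. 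Multiplying the two factors yields
\[
r_{k+1} \;\le\; \frac{c\bigl(\sigma_k + \tfrac{3M}{2}r_k\bigr)}{1 - c(\sigma_k + M r_k)}\,r_k \;=\; \frac{3cMr_k/2 + c\sigma_k}{1 - c(\sigma_k + Mr_k)}\,r_k,
\]
which is \eqref{eq:r-update}. The argument is essentially routine; the only points requiring care are the Frobenius-to-spectral conversion in the $\sigma_k$ term and making the inversion bound quantitative with the stated constant, so I expect no real obstacle here.
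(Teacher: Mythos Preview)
Your proof is correct and follows essentially the same route as the paper: the same three-term splitting of $\mB_k\vs_k - (\mF(\vx_k)-\mF(\vx^*))$ via $\mJ_k$ and $\mJ_*$, the same Taylor remainder bound \eqref{eq:lemma-j2}, and the same Neumann-series inversion estimate. The only cosmetic difference is that the paper bounds $\norm{\mB_k^{-1}\mJ_*}\cdot\norm{\mJ_*^{-1}}$ using $(\mJ_*^{-1}\mB_k)^{-1}$ while you bound $\norm{\mB_k^{-1}}$ directly from the factorization $\mB_k=\mJ_*\bigl(\mI_n+\mJ_*^{-1}(\mB_k-\mJ_*)\bigr)$, which yields the identical constant.
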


The above two lemmas show that $r_k$ and $\sigma_k$ are interrelated. 
However, if $\sigma_k$'s and $r_k$'s are small enough, then $\{r_{k}\} $ converges linearly by Lemma \ref{lemma:r-up}. Meanwhile, since sequence $\{r_{k}\} $ is of linear convergence, then $\{r_{k}\} $ is summable and leads to the boundedness of $\{\sigma_{k}\} $ by Lemma \ref{lemma:sigma-up}. Therefore, we could guarantee a linear convergence rate of $r_k$ under the condition on the benign initialization of $r_0$ and $\sigma_0$.
\begin{lemma}\label{lemma:base}
	Suppose $n\geq 2$, the objective function $\mF(\cdot)$ satisfy Assumption~\ref{ass:lisp} and sequences $\{\vx_k\}$ and $\{\mB_k\}$ generated by Algorithm~\ref{algo:broyden-greedy-random} satisfy Assumptions~\ref{ass:optimal}. We denote $\|\mJ_{*}^{-1}\| = c$. 
	Letting $q \in (0, 1)$, $r_0$ and $\sigma_0$ satisfy
	\begin{equation}\label{eq:init-cond}
	c\sigma_0 \leq \frac{q}{q+1}, \ \ \sqrt{n}cM r_0 \leq \frac{q(1-q)}{12}\left(\frac{q}{1+q}-c\sigma_0\right),
	\end{equation}
	then we have for all $k\geq 0$, 
	\begin{equation}\label{eq:sigma-bound}
	c^2\sigma_{k}^2 \leq c^2\sigma_0^2 + \frac{1+q}{1-q}\left(2c\sqrt{n}Mr_0+ nc^2M^2r_0^2\right) \leq \frac{q^2}{(1+q)^2},
	\end{equation}
	and 
	\begin{equation}\label{eq:linear-con}
	r_{k+1} \leq q r_k.
	\end{equation}
\end{lemma}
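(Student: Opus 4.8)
The plan is to induct on $k$, maintaining simultaneously the bound $c^2\sigma_k^2 \le \frac{q^2}{(1+q)^2}$ on the Jacobian approximation error and the linear decay $r_{k+1}\le q r_k$ on the residual. The base case $k=0$ is immediate from the hypothesis \eqref{eq:init-cond}: the first inequality there gives $c\sigma_0\le\frac{q}{q+1}$, and the second, being a nonnegative bound on $\sqrt n c M r_0$, forces $c\sigma_0\le\frac{q}{1+q}$ as well, so \eqref{eq:sigma-bound} holds trivially at $k=0$.

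For the inductive step, assume \eqref{eq:sigma-bound} holds for all indices up to $k$ and that $r_{j+1}\le q r_j$ for all $j<k$, so that $r_j \le q^j r_0$ and hence $\sum_{j\ge 0} r_j \le \frac{r_0}{1-q}$. First I would verify the smallness condition $c(\sigma_k + M r_k) < 1$ needed to invoke Lemma \ref{lemma:r-up}: from \eqref{eq:sigma-bound} we have $c\sigma_k \le \frac{q}{1+q}$, and $c M r_k \le c M r_0$ is tiny by the second part of \eqref{eq:init-cond}, so the sum is bounded away from $1$ — in fact by something like $\frac{q}{1+q} + \frac{1-q}{12}\cdot(\text{small}) < 1$. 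Then Lemma \ref{lemma:r-up} gives
\begin{equation*}
r_{k+1} \le \frac{\tfrac{3}{2}cMr_k + c\sigma_k}{1 - c(\sigma_k + Mr_k)} r_k,
\end{equation*}
and the job is to show the fraction is at most $q$. Using $c\sigma_k \le \frac{q}{1+q}$ and absorbing the $\frac32 c M r_k$ term (which is $O(cMr_0)$ and controlled by \eqref{eq:init-cond}), the numerator is roughly $\frac{q}{1+q} + \varepsilon$ and the denominator is roughly $1 - \frac{q}{1+q} - \varepsilon = \frac{1}{1+q} - \varepsilon$; the ratio of the leading terms is exactly $q$, and the chosen slack in \eqref{eq:init-cond} (the factors $\frac{q(1-q)}{12}$) is calibrated precisely so the $\varepsilon$-perturbations do not push the ratio past $q$. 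This establishes \eqref{eq:linear-con} at step $k$.

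It remains to propagate \eqref{eq:sigma-bound} to index $k+1$. Here I would use Lemma \ref{lemma:sigma-up}, specifically the deterministic bound \eqref{eq:sigma-update1} (the expectation version \eqref{eq:sigma-update2} is only stronger and not needed for this worst-case statement), and sum the recursion telescopically. Writing $a_k := r_k + r_{k+1}$, we get $\sigma_{k+1}^2 \le \sigma_k^2 + 2\sigma_k\sqrt n M a_k + nM^2 a_k^2$; iterating from $0$ to $k$ and using $\sigma_k \le \sigma_0 + (\text{accumulated error})$ together with $\sum a_j \le \frac{2r_0}{1-q}\cdot$(a constant like $\frac{1+q}{1-q}$ absorbing the overlap) yields
\begin{equation*}
c^2\sigma_{k+1}^2 \le c^2\sigma_0^2 + \frac{1+q}{1-q}\left(2c\sqrt n M r_0 + nc^2M^2r_0^2\right).
\end{equation*}
The main obstacle — and the only genuinely delicate part — is the final step: checking that the right-hand side above is $\le \frac{q^2}{(1+q)^2}$. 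This is where the precise form of the second inequality in \eqref{eq:init-cond} is consumed. Setting $t := c\sigma_0 \le \frac{q}{1+q}$ and $\rho := \sqrt n c M r_0$, the claim becomes $t^2 + \frac{1+q}{1-q}(2\rho + \rho^2) \le \frac{q^2}{(1+q)^2}$; since \eqref{eq:init-cond} gives $\rho \le \frac{q(1-q)}{12}\big(\frac{q}{1+q} - t\big)$, one bounds $2\rho + \rho^2 \le 3\rho$ (as $\rho$ is small), so $\frac{1+q}{1-q}\cdot 3\rho \le \frac{q(1+q)}{4}\big(\frac{q}{1+q}-t\big) \le \frac{q}{4}\big(\frac{q}{1+q}+t\big)\cdot\frac{1+q}{1+q}$, and then $t^2 + (\text{this}) \le \big(\frac{q}{1+q}\big)^2$ follows from the factorization $\frac{q^2}{(1+q)^2} - t^2 = \big(\frac{q}{1+q}-t\big)\big(\frac{q}{1+q}+t\big)$ once one checks the constant $\frac{q(1+q)}{4} \le \frac{q}{1+q}+t$ trivially (both sides $\le 1$, and one can be cruder). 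I would present this arithmetic carefully but it is elementary; the cleverness lives entirely in having chosen the constants $\frac{q(1-q)}{12}$ and $\frac{q}{q+1}$ in \eqref{eq:init-cond} to make it close.
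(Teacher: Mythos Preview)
Your inductive scheme and the use of Lemmas \ref{lemma:sigma-up} and \ref{lemma:r-up} match the paper's approach, and your telescoping argument and final arithmetic for the second inequality in \eqref{eq:sigma-bound} are essentially right. However, there is a real gap in your derivation of $r_{k+1}\le q r_k$.

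You invoke only the weaker half of the inductive hypothesis, namely $c\sigma_k\le\frac{q}{1+q}$, and claim the residual $\tfrac32 c M r_k$ is absorbed as an $\varepsilon$-perturbation. But the target inequality $\frac{\tfrac32 c M r_k + c\sigma_k}{1 - c\sigma_k - c M r_k}\le q$ rearranges to $\big(\tfrac32+q\big)c M r_k + (1+q)c\sigma_k \le q$; if $c\sigma_k = \frac{q}{1+q}$ the second term already equals $q$, so no positive $c M r_k$ can be accommodated. The paper closes this by carrying the \emph{first} (tighter) inequality in \eqref{eq:sigma-bound} as the working hypothesis and proving the strictly stronger bound
\[
\frac{2a}{\sqrt n} + \sqrt{c^2\sigma_0^2 + \tfrac{1+q}{1-q}\,(2a+a^2)} \;\le\; \frac{q}{1+q},\qquad a:=\sqrt n\, c M r_0,
\]
which is precisely $2cMr_0 + c\sigma_k \le \frac{q}{1+q}$ and hence yields $r_{k+1}\le qr_k$. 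Your final arithmetic checks only that the square-root term alone is at most $\frac{q}{1+q}$; to close the loop you must carry the extra $\frac{2a}{\sqrt n}$ through. The constant $12$ in \eqref{eq:init-cond} was chosen with enough room for this (the paper routes the computation through an auxiliary inequality containing the additional term $\frac{4aq}{\sqrt n (q+1)}$), but it has to be done explicitly.

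A secondary point: in the telescoping step for $\sigma$, your phrase ``using $\sigma_k \le \sigma_0 + (\text{accumulated error})$'' is ambiguous and risks circularity. The clean device (which the paper uses) is to bound the cross term $2c^2\sigma_j\sqrt n M(r_j+r_{j+1})$ using only the crude $c\sigma_j\le 1$ from the inductive hypothesis, giving $2c\sqrt n M(1+q)r_j$, and then sum geometrically to land exactly on $\frac{1+q}{1-q}\big(2c\sqrt n M r_0 + nc^2M^2r_0^2\big)$.
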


To better quantify the linear convergence rate of $r_k$, we define $q_m$ given the suitable $\sigma_0$ and $r_0$, 
\begin{equation*}
q_m := \min q, \ s.t. \ c\sigma_0 \leq \frac{q}{q+1}, \ \sqrt{n}c M r_0 \leq \frac{q(1-q)}{12} \left(\frac{q}{1+q}-c\sigma_0\right).
\end{equation*}
Such $q_m$ needs a pair of proper $\sigma_0$ and $ r_0$ to guarantee a nonempty feasible set, and $q_m$ determines the optimal linear convergence rate provided by Broyden's method from Lemma \ref{lemma:base}. 
Then we could obtain a more understandable result.

\begin{corollary}\label{coro:good}
	Suppose the objective function $\mF(\cdot)$ and sequences $\{\vx_k\}$ and $\{\mB_k\}$ satisfy the properties described in Lemma~\ref{lemma:base}. If $r_0$ and $\sigma_0$ satisfy
	\begin{equation}\label{eq:init-super}
	48 \sqrt{n} c M r_0 + c\sigma_0 \leq \frac{1}{3},
	\end{equation}
	then $q_m$ is well-defined and we have $q_m =  \Theta\left(c\sigma_0 + \sqrt{n^{1/2} c M r_0}\right)$.
\end{corollary}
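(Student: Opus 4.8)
The plan is to analyze the definition of $q_m$ directly, treating it as a small constrained optimization problem in the single scalar variable $q\in(0,1)$. Write $a := c\sigma_0$ and $b := \sqrt{n}\,cMr_0$ for brevity (both nonnegative). The two constraints defining the feasible set for $q$ are
\[
a \le \frac{q}{q+1}, \qquad b \le \frac{q(1-q)}{12}\left(\frac{q}{1+q} - a\right).
\]
The first is equivalent to $q \ge \frac{a}{1-a}$ (valid since the hypothesis \eqref{eq:init-super} forces $a \le \frac13 < 1$). The second constraint has a right-hand side that vanishes at $q = a/(1-a)$ (where $q/(1+q) = a$) and at $q=1$, and is positive in between; so the feasible set is a nonempty closed subinterval $[q_-, q_+]\subseteq(0,1)$ exactly when the first constraint's lower bound is strictly below $1$ and the ``budget'' $b$ is not too large. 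The main content is to show that under \eqref{eq:init-super} this interval is nonempty (hence $q_m := q_-$ is well-defined) and to pin down the order of magnitude of $q_-$.

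First I would establish well-definedness: I need a candidate $q$ satisfying both constraints. I would try $q = \Theta(a + \sqrt{b})$ with a suitable absolute constant — concretely, something like $q_0 = 3a + 6\sqrt{b}$ (or whatever constant the arithmetic dictates). Plugging into the first constraint, $q_0 \ge 3a \ge a/(1-a)$ since $a\le \frac13$. For the second, note $\frac{q_0}{1+q_0} - a \ge \frac{q_0}{1+q_0} - \frac{q_0}{3}$, and since \eqref{eq:init-super} gives $q_0 \le \frac13 + $ (small), we have $1+q_0 = \Theta(1)$ and $q_0(1-q_0) = \Theta(q_0)$, so the right-hand side is $\Theta(q_0 \cdot (q_0 - a)) = \Theta(q_0 \cdot \sqrt{b})$ when the $\sqrt{b}$ term dominates, and $\Theta(q_0 \cdot a)$-ish otherwise; in either regime one checks $\frac{q_0(1-q_0)}{12}(\frac{q_0}{1+q_0}-a) \ge b$ by the choice of constants, using $q_0 - a \ge 6\sqrt b - 2a$ and splitting into cases according to whether $a$ or $\sqrt b$ dominates $q_0$. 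This shows the feasible set is nonempty, so $q_m = q_-$ exists.

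For the order of $q_m$: the upper bound $q_m = O(a + \sqrt b)$ follows from the candidate $q_0$ above (since $q_m \le q_0$). For the matching lower bound $q_m = \Omega(a + \sqrt b)$, I would argue that any feasible $q$ must satisfy $q \ge a/(1-a) \ge a$, giving $q_m = \Omega(a)$; and any feasible $q$ must satisfy $b \le \frac{q(1-q)}{12}(\frac{q}{1+q}-a) \le \frac{q}{12}\cdot\frac{q}{1+q} \le \frac{q^2}{12}$, so $q \ge \sqrt{12 b} = \Omega(\sqrt b)$, giving $q_m = \Omega(\sqrt b)$. Averaging these two lower bounds gives $q_m = \Omega(a + \sqrt b)$, which combined with the upper bound yields $q_m = \Theta(a + \sqrt b) = \Theta\bigl(c\sigma_0 + \sqrt{\sqrt{n}\,cMr_0}\bigr)$, matching the claimed $\Theta\left(c\sigma_0 + \sqrt{n^{1/2}cMr_0}\right)$.

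The main obstacle is purely bookkeeping: choosing the absolute constant in the candidate $q_0$ so that \emph{both} inequalities hold simultaneously under the single hypothesis \eqref{eq:init-super}, and handling the two regimes ($c\sigma_0$ dominant vs.\ $\sqrt{\sqrt n\,cMr_0}$ dominant) uniformly. There is no conceptual difficulty — it is a one-variable estimate — but one must be careful that the slack in \eqref{eq:init-super} (the factor $48$ and the bound $\frac13$) is generous enough to absorb the constants, and that the denominators $1+q$, $1-q$ stay bounded away from $0$ and $\infty$ throughout, which they do precisely because \eqref{eq:init-super} keeps $q_0$ safely below $1$.
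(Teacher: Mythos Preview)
Your proposal is correct, and the overall strategy (direct analysis of the one-variable feasibility problem, with $a=c\sigma_0$ and $b=\sqrt n\,cMr_0$) matches the paper's. Your lower bound argument ($q_m\ge a$ from the first constraint, $q_m\ge\sqrt{12b}$ from crudely bounding the second) is essentially what the paper does.

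Where you differ is in the upper bound. You propose to exhibit an explicit candidate $q_0=\Theta(a+\sqrt b)$ and verify both constraints, which---as you correctly flag---forces case-splitting and careful constant-tracking (your tentative choice $q_0=3a+6\sqrt b$ can hit $1$ when $a=1/3$, so the constants genuinely need adjusting). The paper avoids this entirely: it first checks feasibility cheaply at the fixed point $q=\tfrac12$, using the hypothesis \eqref{eq:init-super} to get $f(\tfrac12)=\tfrac14(\tfrac13-c\sigma_0)\ge 12\sqrt n\,cMr_0$ for $f(q):=q(1-q)\bigl(\tfrac{q}{1+q}-a\bigr)$; this shows $q_m\le\tfrac12$. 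Then, since the second constraint is the binding one at $q_m$, the paper uses the \emph{equality} $f(q_m)=12b$ and sandwiches $f$ on $[a,\tfrac12]$ between the two simple quadratics $\tfrac{q}{2}(\tfrac{q}{2}-a)\le f(q)\le q(q-a)$, turning both the upper and lower bounds on $q_m$ into quadratic-formula computations. This buys a cleaner, regime-free argument with no candidate to tune; your approach is more constructive but pays for it in bookkeeping.
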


Based on the above lemmas, we are going to prove the main results in Section~\ref{sec:gre-ran-method}.
First, we give the detailed proof of Theorem~\ref{thm:super}.

\begin{proof}[Proof of Theorem~\ref{thm:super}]
	The results in Lemma \ref{lemma:base} still hold since Corollary \ref{coro:good} shows that there exists such a $q$ that satisfies the requirement in Lemma~\ref{lemma:base} 
	Thus, combining with Lemma \ref{lemma:r-up}, we have
	\begin{equation}\label{eq:rk-up-again}
		r_{k+1} \stackrel{(\ref{eq:r-update})}{\leq} \frac{3cMr_k/2+c\sigma_k}{1-c\sigma_k-cMr_k}r_k, \text{ if } c\sigma_k+cMr_k<1.
	\end{equation}
	From Lemma~\ref{lemma:base}, we have $c\sigma_k+cMr_k \stackrel{(\ref{eq:cmr-si})}{\leq} \frac{q}{q+1}<1$, showing that Eq.~\eqref{eq:rk-up-again} holds and 
	\begin{equation}\label{eq:rk-up-super}
		r_{k+1} \leq (q+1)\left(3cMr_k/2+c\sigma_k\right)r_k.
	\end{equation}
Thus, we could derive the update of $\sigma_{k}$ more rigorously by Lemma~\ref{lemma:sigma-up} as follows
	\begin{equation}\label{eq:sigma-k-up2}
		\begin{aligned}
			\E_{\vu_{k}} \left[\sigma_{k+1}^2\right] 
			&\stackrel{(\ref{eq:sigma-update2})}{\leq} \left(1-\frac{1}{n}\right)\sigma_k^2 + 2\sigma_k \sqrt{n}M\left(r_k+r_{k+1}\right)+n M^2\left(r_k+r_{k+1}\right)^2 \\
			&\leq \left(1-\frac{1}{n}\right)\sigma_k^2 + 2\sigma_k \sqrt{n}M\left(r_k+r_{k+1}\right)+n M^2\left(r_k+r_{k+1}\right)^2 \\
			&\stackrel{(\ref{eq:linear-con})}{\leq} \left(1-\frac{1}{n}\right)\sigma_k^2 + 2\sigma_k (1+q) \sqrt{n}Mr_k + (1+q)^2nM^2r_k^2 \\
			& \leq \left(1-\frac{1}{n}\right)\left[\sigma_k + \frac{1}{1-\frac{1}{n}}(1+q) \sqrt{n}Mr_k \right]^2 \\
			& \leq \left(1-\frac{1}{n}\right)\left[\sigma_k + 2(1+q) \sqrt{n}Mr_k \right]^2,
		\end{aligned}	
	\end{equation}
	where the last inequality uses the assumption that 
	\begin{equation}\label{eq:n}
		\frac{1}{n} \leq \frac{1}{2}, n\geq 2.
	\end{equation}
	Hence we obtain the update of $\sigma_k$ (in expectation):
	\begin{equation}\label{eq:sigma-k-up2-super}
		\begin{aligned}
			\E_{\vu_{k}} \left[\sigma_{k+1}\right] \leq \sqrt{\E_{\vu_{k}} \left[\sigma_{k+1}^2\right]} &\stackrel{(\ref{eq:sigma-k-up2})}{\leq} \sqrt{\left(1-\frac{1}{n}\right)}\left(\sigma_k + 2(1+q) \sqrt{n}Mr_k \right).
		\end{aligned}	
	\end{equation}
	Now we define $R_k = \sigma_{k}+2(1+q)\sqrt{n}Mr_{k}$.
	Then, we have
	\begin{equation*}
		\begin{aligned}
			\E_{\vu_{k}}\left[ R_{k+1}\right] &\stackrel{\eqref{eq:sigma-k-up2-super}\eqref{eq:rk-up-again}}{\le} \sqrt{\left(1-\frac{1}{n}\right)}\left(\sigma_k + 2(1+q) \sqrt{n}Mr_k \right) + \left(3Mr_k/2+\sigma_k\right) \cdot 2c(1+q)^2\sqrt{n}Mr_k \\
			&\stackrel{(\ref{eq:n})}{\leq} \sqrt{\left(1-\frac{1}{n}\right)}R_k + \sqrt{2\left(1-\frac{1}{n}\right)} \left(3Mr_k/2+\sigma_k\right) \cdot 2c(1+q)^2\sqrt{n}Mr_k \\
			&\leq \sqrt{\left(1-\frac{1}{n}\right)}R_k + \sqrt{\left(1-\frac{1}{n}\right)} R_k \cdot 2\sqrt{2}c(1+q)^2\sqrt{n}Mr_k \\
			&= \sqrt{\left(1-\frac{1}{n}\right)}R_k \left(1+ 2\sqrt{2}c(1+q)^2\sqrt{n}Mr_k\right) \\
			&\leq \sqrt{\left(1-\frac{1}{n}\right)}R_k \exp{\left(2\sqrt{2}c(1+q)^2\sqrt{n}Mr_k\right)} \\
			&\stackrel{(\ref{eq:linear-con})}{\leq} \sqrt{\left(1-\frac{1}{n}\right)}R_k \exp{\left(2\sqrt{2}c(1+q)^2\sqrt{n}Mq^kr_0\right)}.
		\end{aligned}
	\end{equation*}	
	Now we take expectation for all $\vu_i$'s in both sides and telescope from $k$ to $0$, we get
	\begin{equation}\label{eq:RkR0}
		\begin{aligned}
			\E \left[ R_{k}\right] &\leq \left(1-\frac{1}{n}\right)^{k/2}R_0 \exp{\left(2\sqrt{2}c(1+q)^2\sqrt{n}M\sum_{i=0}^{k}q^ir_0\right)} \\
			&\leq \left(1-\frac{1}{n}\right)^{k/2}R_0 \exp{\left(2\sqrt{2}c(1+q)^2\sqrt{n}Mr_0/(1-q)\right)} \\
			&\stackrel{(\ref{eq:init})}{\leq} \left(1-\frac{1}{n}\right)^{k/2}R_0 \exp\left\{2\sqrt{2} \cdot \frac{(1+q)^2}{1-q}\cdot \frac{q(1-q)}{12} \cdot \frac{q}{1+q}\right\} \\
			&= \left(1-\frac{1}{n}\right)^{k/2}R_0 \exp{\left(q^2\right)} \leq \left(1-\frac{1}{n}\right)^{k/2}R_0 e.
		\end{aligned}
	\end{equation}
	Moreover, 
	\begin{equation}\label{eq:R0}
		cR_0 = c\sigma_{0}+2(1+q)c\sqrt{n}Mr_{0} \stackrel{(\ref{eq:init})}{\leq} c\sigma_0+ \left(\frac{q}{q+1}-c\sigma_0\right) = \frac{q}{q+1}<1.
	\end{equation}
	Therefore, we obtain
	\begin{equation}\label{eq:Rk}
		\E \sigma_k \leq \E R_{k} \stackrel{\eqref{eq:RkR0}, \eqref{eq:R0}}{\leq} \frac{e}{c} \left(1-\frac{1}{n}\right)^{k/2}.
	\end{equation}
	Hence Eq. \eqref{eq:sigma-linear} is proved.
	Furthermore,  we could obtain 
	\begin{equation*}
		\E \left[\frac{r_{k+1}}{r_k}\right] 
		\stackrel{\eqref{eq:rk-up-super}}{\le} \E \left[(q+1)\left(3cMr_k/2+c\sigma_k\right)\right] \leq \E \left[cR_k\right] \stackrel{(\ref{eq:Rk})}{\leq} e \left(1-\frac{1}{n}\right)^{k/2},
	\end{equation*}
	which concludes the proof.
\end{proof}

Based on Theorem~\ref{thm:super} and above lemmas, we are going to prove Theorem~\ref{thm:greedy} as follows.
\begin{proof}[Proof of Theorem~\ref{thm:greedy}]
	First, Eq.~\eqref{eq:super-bound} holds without any randomness since $\vu_k$ is not a random vector.
	Thus, we have
	\begin{align*}
		\norm{\vx_k - \vx^*} =& \prod_{i=1}^k \frac{\norm{\vx_i - \vx^*}}{\norm{\vx_{i-1} - \vx^*} } \cdot \norm{\vx_0 - \vx^*}
		\stackrel{\eqref{eq:super-bound}}{\le} e^k \prod_{i=1}^k \left(1 - \frac{1}{n}\right)^{\frac{i-1}{2}}\cdot \norm{\vx_0 - \vx^*}
		\\
		=&e^k \left( 1 -\frac{1}{n} \right)^{\frac{k(k-1)}{4}} \cdot \norm{\vx_0 - \vx^*}.
	\end{align*}
	Furthermore, when $k \geq 4n+3$, we get 
	\[ \ln\left[e^{k-1}\left(1-\frac{1}{n}\right)^{k(k-3)/4}\right] \leq k-1-\frac{k(k-3)}{4n} \leq k-1-k \leq 0,\]
	showing that
	\begin{equation}\label{eq:q}
		e^k\left(1-\frac{1}{n}\right)^{k(k-1)/4} \leq e\left(1-\frac{1}{n}\right)^{k/2}.
	\end{equation}
	From Theorem \ref{thm:super}, we obtain
	\begin{equation*}
		\begin{aligned}
			c\norm{\mB_k - \mJ_*}_F &\leq c\left[\norm{\mB_k - \mJ_k}_F + \norm{\mJ_k - \mJ_*}_F \right]\stackrel{(\ref{jac-measure}, \ref{eq:lisp})}{\leq }  c \sigma_k + c\sqrt{n}Mr_k  \\
			&\stackrel{(\ref{eq:sigma-linear}, \ref{eq:super-bound})}{\leq} e\left(1-\frac{1}{n}\right)^{k/2} + e^k\left(1-\frac{1}{n}\right)^{k(k-1)/4} c\sqrt{n}Mr_0 \\
			&\stackrel{(\ref{eq:init})}{\leq } e\left(1-\frac{1}{n}\right)^{k/2} + e^k\left(1-\frac{1}{n}\right)^{k(k-1)/4}
			\stackrel{\eqref{eq:q}}{\leq} 2e\left(1-\frac{1}{n}\right)^{k/2}.
		\end{aligned}
	\end{equation*}
\end{proof}
Now we are going to prove Theorem~\ref{thm:random} as follows.
\begin{proof}[Proof of Theorem~\ref{thm:random}]
	We consider random variable $\mX_k=c\sigma_k, \forall k \geq 0$ or $\mX_k=r_{k+1}/r_k, \forall k \geq 0$ in the following derivation.
	Note that $\mX_k \geq 0$, using Markov's inequality, we have for any $\epsilon>0$, 
	\begin{equation}\label{eq:mi}
		\sP\left(\mX_k \geq \frac{e}{\epsilon} \left(1-\frac{1}{n}\right)^{k/2} \right) \leq \frac{\E \ \mX_k}{\frac{e}{\epsilon} \left(1-\frac{1}{n}\right)^{k/2}} \stackrel{\eqref{eq:sigma-linear} or \eqref{eq:super-bound}}{\leq} \epsilon.
	\end{equation}
	Choosing $\epsilon_k = \delta (1-q)q^k$ for some positive $q < 1$, then we have 
	\begin{equation*}
		\begin{aligned}
			\sP\left(\mX_k \geq \frac{e}{\epsilon_k} \left(1-\frac{1}{n}\right)^{k/2}, \exists \ k\in\sN\right) 
			\leq&  \sum_{k=0}^\infty \sP\left(\mX_k
			\geq \frac{e}{\epsilon_k}\left(1-\frac{1}{n}\right)^{k/2} \right) \\
			\stackrel{(\ref{eq:mi})}{\leq}&  \sum_{k=0}^\infty \epsilon_k = \sum_{k=0}^\infty \delta (1-q)q^k = \delta.
		\end{aligned}
	\end{equation*}
	Therefore, we obtain with probability $1-\delta$,
	\begin{equation*}
		\mX_k \leq \left(\frac{1-\frac{1}{n}}{q^2}\right)^{k/2} \cdot \frac{e}{(1-q)\delta}, \forall k\in\sN.
	\end{equation*}
	If we set $q = \sqrt{1 - \left(\frac{1}{n}\right)^2} < 1-\frac{1}{2n^2}$, we could obtain with probability $1-\delta$, for all $k \in \sN$,
	\begin{equation*}
		\mX_k \leq \frac{2n^2e}{\delta}\left(1+\frac{1}{n}\right)^{-k/2} = \frac{2n^2e}{\delta}\left(1-\frac{1}{n+1}\right)^{k/2}.
	\end{equation*}
That is, for all $k$, it holds with probability $1-\delta$ that
\begin{align}
	\frac{r_{k+1}}{r_k} \le \frac{4n^2e}{\delta}\left(1-\frac{1}{n+1}\right)^{k/2}, \label{eq:r_tel}
\end{align}
and
\begin{align}
	\label{eq:super-r}
	c\sigma_{k} \le \frac{4n^2e}{\delta}\left(1-\frac{1}{n+1}\right)^{k/2}. 
\end{align}
	By telescoping from $k$ to $0$, it holds with probability $1-\delta$ that
	\begin{align*}
	r_{k} = r_0 \cdot \prod_{i=1}^k \frac{r_i}{r_{i-1}} \stackrel{\eqref{eq:r_tel}}{\le} r_0 \cdot \left(\frac{4n^2e}{\delta}\right)^{k} \prod_{i=1}^k \left(1 - \frac{1}{n+1}\right)^{\frac{i-1}{2}}
	=  \left(\frac{4n^2e}{\delta}\right)^{k}\left(1-\frac{1}{n+1}\right)^{k(k-1)/4} r_0.
	\end{align*}
	
	Furthermore, when $k \geq 4(n+1)\ln\frac{4n^2e}{\delta} + 3$, we get 
	\begin{equation*}
		\ln \left[\left(\frac{4n^2e}{\delta}\right)^{k-1}\left(1-\frac{1}{n+1}\right)^{k(k-3)/4}\right] \leq (k-1)\ln\frac{4n^2e}{\delta} -\frac{k(k-3)}{4(n+1)} \leq 0.
	\end{equation*}
	Hence,
	\begin{equation}\label{eq:q2}
		\left(\frac{4n^2e}{\delta}\right)^k\left(1-\frac{1}{n+1}\right)^{k(k-1)/4} \leq \frac{4n^2e}{\delta}\left(1-\frac{1}{n+1}\right)^{k/2}.
	\end{equation}
	Using above equations, we have
	\begin{equation*}
		\begin{aligned}
			c\norm{\mB_k - \mJ_*}_F &\leq c\left[\norm{\mB_k - \mJ_k}_F + \norm{\mJ_k - \mJ_*}_F \right]\stackrel{(\ref{jac-measure}, \ref{eq:lisp})}{\leq } c \sigma_k + c\sqrt{n} M r_k  \\
			&\stackrel{(\ref{eq:super-p}, \ref{eq:super-r})}{\leq} \frac{4n^2e}{\delta}\left(1-\frac{1}{n+1}\right)^{k/2} + \left(\frac{4n^2e}{\delta}\right)^k\left(1-\frac{1}{n+1}\right)^{k(k-1)/4} c\sqrt{n}Mr_0 \\
			&\stackrel{(\ref{eq:init})}{\leq } \frac{4n^2e}{\delta}\left(1-\frac{1}{n+1}\right)^{k/2} + \left(\frac{4n^2e}{\delta}\right)^k\left(1-\frac{1}{n+1}\right)^{k(k-1)/4} \\
			&\stackrel{\eqref{eq:q2}}{\leq} \frac{8n^2e}{\delta}\left(1-\frac{1}{n+1}\right)^{(k-1)/4},
		\end{aligned}
	\end{equation*}
which concludes the proof.
\end{proof}

\section{Experiments}\label{sec:exp}
In this section, we conduct numerical experiments for greedy and random Broyden's methods.
We adopt two kinds of common initialization to compare greedy and random versions with the original Broyden's method.
\begin{itemize}
	\item \textbf{Good Initial}: The first initialization employs $\vx_0$ and $\mB_0$ to be close to a solution $\vx^*$ like our theory mentioned. Specifically, we employ the initial approximate Jacobian $\mB_0 = \mJ(\vx_0)$ which is common in practice \cite{nocedal2006numerical} for Broyden's method. 
	Moreover, we may use Newton's method to obtain a (nondegenerate) solution $\vx^*$ and then choose initial $\vx_0$ after a little perturbation, or adopt a random $\vx_0$ for comparison with the ``bad initials'' below.
	\item \textbf{Bad Initial}: The second initialization adopts a random $\vx_0$ and $\mB_0 = sI_n$ with alternative $s>0$. In general smooth and strongly convex problems, $s$ can be the smooth coefficient, i.e., the upper bound of the maximal eigenvalue of Hessians. 
\end{itemize}

\paragraph{Regularized Log-Sum-Exp.}

\begin{figure}[t]
	\centering
	\hspace{-8pt}
	\begin{subfigure}[b]{0.5\textwidth}
		\includegraphics[width=1\linewidth]{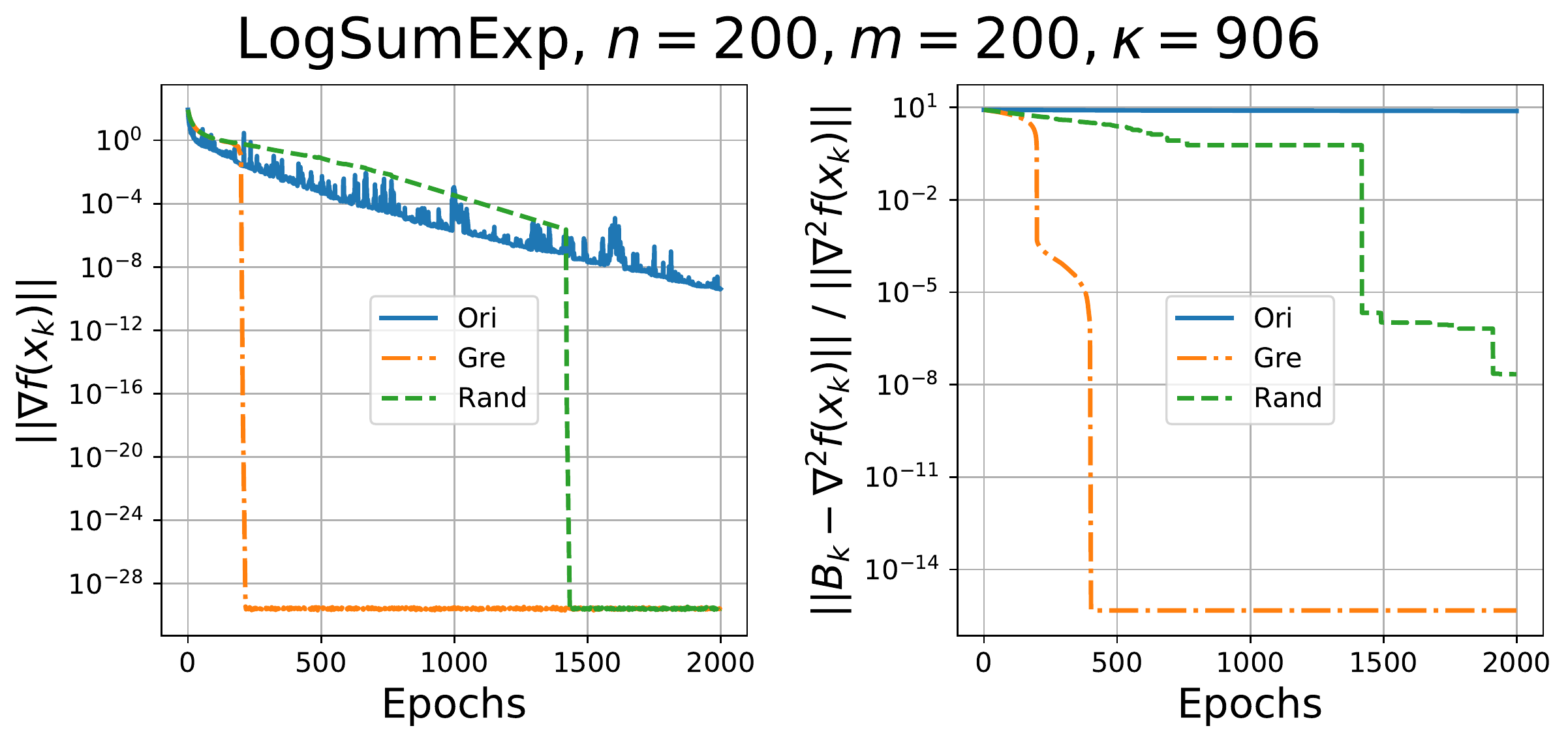}
		\caption{$\mB_0=L\mI_n$.} \label{fig:exp-gel}
	\end{subfigure}
	\hspace{-8pt}
	\begin{subfigure}[b]{0.5\textwidth}
		\includegraphics[width=1\linewidth]{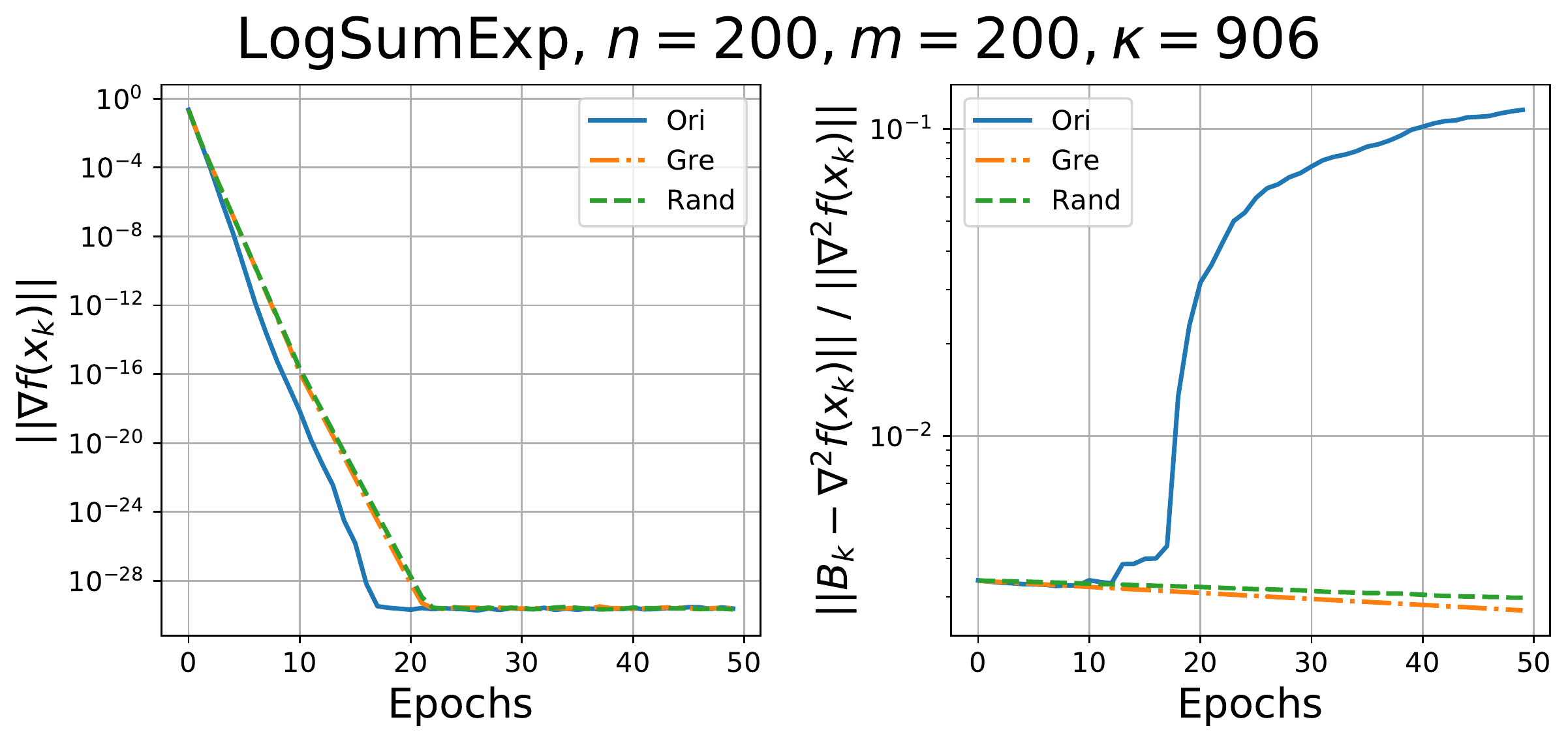}
		\caption{$\mB_0=\nabla^2 f(\vx_0)$.} \label{fig:exp-near}
	\end{subfigure}
	\caption{Comparison of greedy and random with original Broyden's methods for Regularized Log-Sum-Exp under the regularized constant $\gamma=1$, We choose shared $\vx_0\sim\text{Unif}\left(\gS^{n-1}\right)$, and list the dimension $n$ and the condition number $\kappa$ in each title.
		(a) $\mB_0=L\mI_n$, where $L=2\lambda_{\max}(C C^\top)+\gamma$ is the smooth coefficient of $f(\vx)$. (b) $\mB_0=\nabla^2 f(\vx_0)$. In each sub-figure, we plot $\norm{\nabla f(\vx_k)}$ in the left graph and $\norm{\mB_k-\nabla^2 f(\vx_k)}_F / \norm{\nabla^2 f(\vx_k)}_F$ in the right graph. Legends explanation: `Ori' is the original Broyden's method, `Gre' and `Rand' are the greedy and random Broyden's method.}
	\label{fig:broyden-logsumexp-reg}
\end{figure}

Followed by \citet{rodomanov2021greedy}, we present preliminary computational results for the following regularized Log-Sum-Exp function:
\begin{equation*}
f(\vx) := \ln \left(\sum_{j=1}^m e^{\vc_j^\top \vx-b_j}\right)+\frac{1}{2}\sum_{j=1}^m \left(\vc_j^\top\vx\right)^2+\frac{\gamma}{2}\left\|\vx\right\|^2, \vx\in\sR^n,
\end{equation*}
where $\mC = \left(\vc_1,\dots,\vc_m\right)\in\sR^{n\times m}$, $b_1, \dots, b_m \in\sR$, and $\gamma>0$.
We apply Broyden's methods to solve $\mF(\vx):=\nabla f(\vx) =\bm{0}$ and we get $\mJ(\vx) = \nabla^2 f(\vx)$.
Moreover, the Hessian $\nabla^2 f(\vx)$ is also differentiable. Thus it is locally Lipschitz continuous restricted in a bounded domain near the optimal solution $\vx^*$, which satisfies Assumption \ref{ass:lisp}.
We also adopt the same synthetic data as \cite[Section 5.1]{rodomanov2021greedy}. 
That is, we generate a collection of random vectors $\hat{\vc}_1, \dots, \hat{\vc}_m$ with entries, uniformly distributed in the interval $ [-1, 1] $. Then we generate $ b_1, \dots, b_m$ from the same distribution and we define
\[ \vc_j := \hat{\vc}_j -\nabla \hat{f}(\bm{0}), j=[m] \text{ where } \hat{f}(\vx) := \ln \left(\sum_{j=1}^m e^{\hat{\vc}_j^\top \vx-b_j}\right). \]
So the unique minimizer of our test function is $ \vx^* = \bm{0} $. 
We set the same starting point $\vx_0\sim\mathcal{N}\left(\bm{0}, \mI_n\right)$ and $\mH_0 = \mB_0^{-1}$, where $\mB_0 =s \nabla^2 f(\vx^*)$ or $s\nabla^2 f(\vx_0)$, where scalar $s$ controls the distance with the optimal $\mJ_*=\nabla^2 f(\vx^*)$. We also plot $\norm{\nabla f(\vx_k)}$ to trace the objective and $\frac{\norm{\mB_k-\nabla^2 f(\vx_k)}_F}{\norm{\nabla^2 f(\vx_k)}_F}$ to trace the Hessian approximation error in each epoch in Figure \ref{fig:broyden-logsumexp-reg}. 

As Figure \ref{fig:broyden-logsumexp-reg} depicted, greedy and random Broyden's methods have comparable performance to the original Broyden's method, but they have different suitable schemes.
From Figure \ref{fig:exp-gel}, we observe that greedy and random methods are more suitable for the \textbf{Bad Initial}, while the original Broyden's method is still the best for the \textbf{Good Initial}. 
Moreover, greedy and random methods could give a more stable training curve of $\norm{\nabla f(\vx_k)}$ and a more accurate approximation of the current $\nabla^2 f(\vx_k)$. 
Furthermore,  the greedy Broyden's method converges faster than the random version. 
Although our theory does not apply to such observation, we consider the random-type method may encounter wrong update directions $\vu_k$ during optimization. 
Yet the greedy-type method could generally choose a good $\vu_k$ in each step.
Once the random method selects a good update direction $\vu_k$, then the performance improves significantly, showing a staircase curve in Figure \ref{fig:broyden-logsumexp-reg}.
However, the greedy Broyden's method requires extra computation cost to obtain a good direction $\vu_k$.
Overall, we conclude that greedy and random methods could give a better estimation of current Hessians. And they may have a great scope of suitable initialization.

\paragraph{The Chandrasekhar H-equation.}

\begin{figure}[t]
	\centering
	\hspace{-8pt}
	\begin{subfigure}[b]{0.48\textwidth}
		\includegraphics[width=\linewidth]{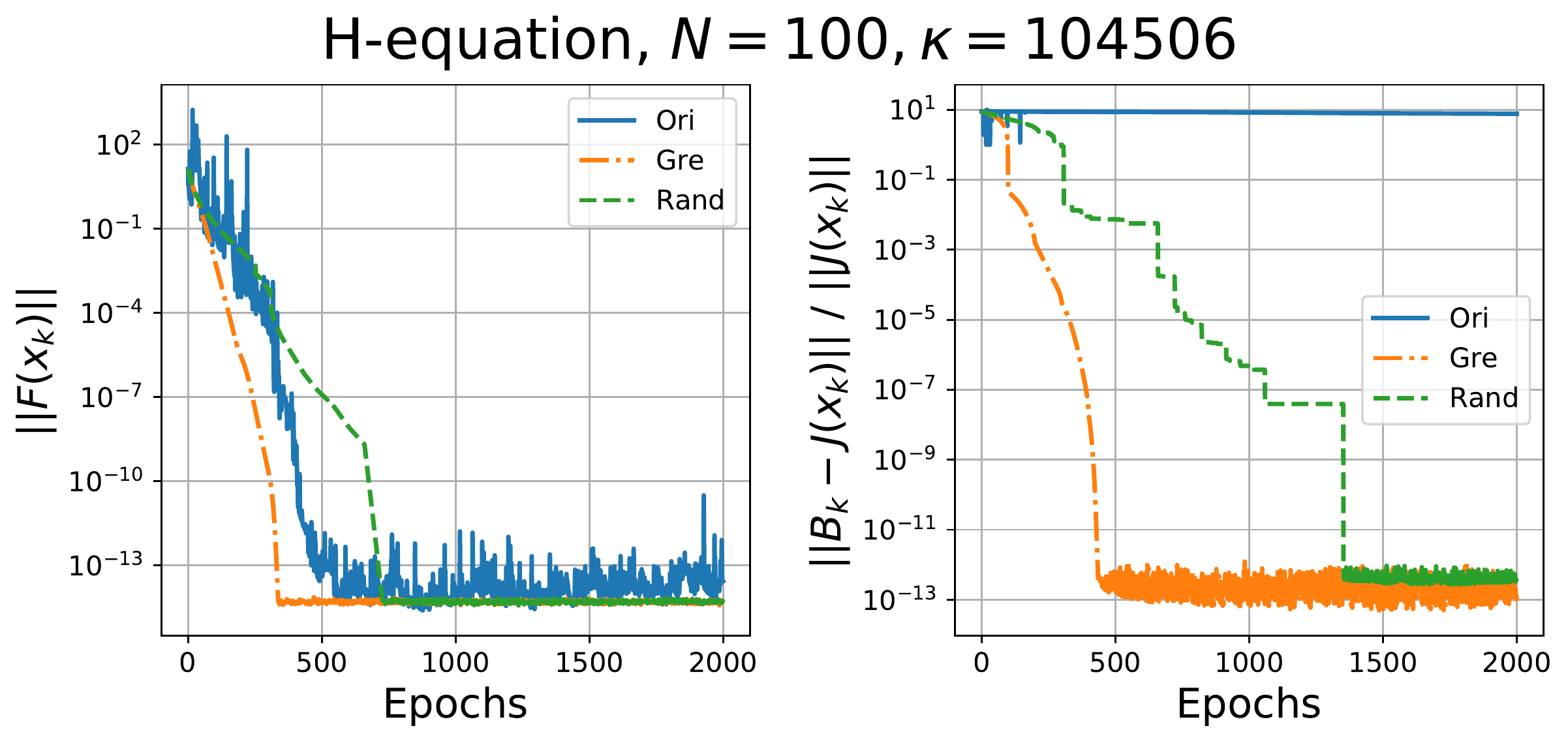}
		\caption{$\mB_0=10\mI_N$.} \label{fig:exp-10I}
	\end{subfigure}
	\hspace{-6pt}
	\begin{subfigure}[b]{0.48\textwidth}
		\includegraphics[width=\linewidth]{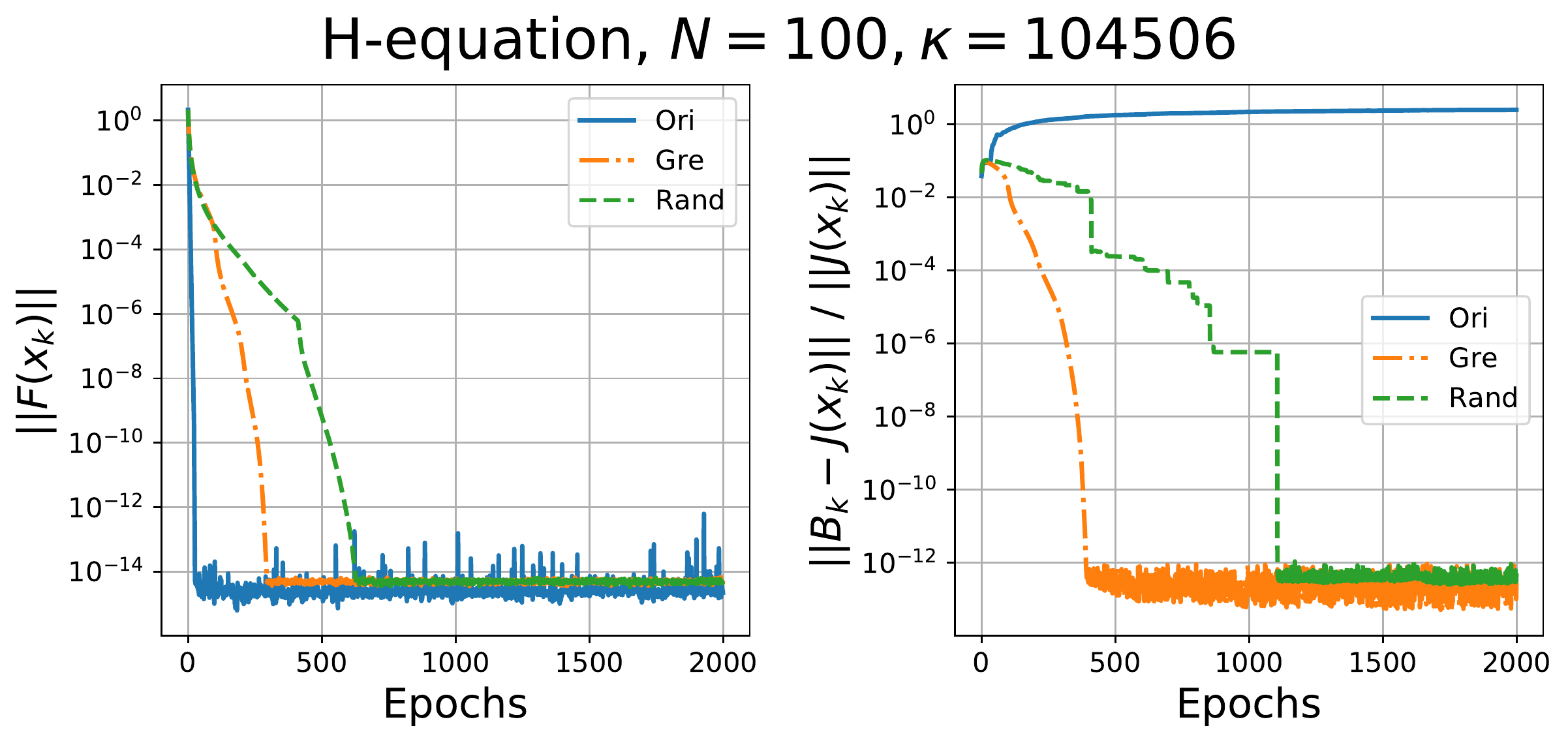}
		\caption{$\mB_0=\mJ(\vx_0)$.} \label{fig:exp-H-near}
	\end{subfigure}
	\caption{Comparison of greedy and random with original Broyden's methods for the Chandrasekhar H-equation. The constant $c=1{-}10^{-10}$ and we choose shared $\vx_0\sim\mathcal{N}\left(\bm{0}, \mI_N\right)$. We also list the dimension $N$ and the condition number $\kappa$ in each title. (a) $\mB_0=10\mI_N$, (b) $\mB_0=\mJ(\vx_0)$. Due to numerical overflow, the original Broyden's method encounter `NAN' during optimization in (a). In each sub-figure, we plot $\norm{\mF(\vx_k)}$ in the left graph and $\norm{\mB_k-\mJ(\vx_k)}_F/\norm{\mJ(\vx_k)}_F$ in the right graph. Legends explanation: `Ori' is the original Broyden's method, `Gre' and `Rand' are the greedy and random Broyden's method.}
	\label{fig:broyden-H-equation}
\end{figure}

We also try to solve a general nonlinear equation by Broyden's methods. The next problem is the Chandrasekhar H-equation \cite[Section 5]{nocedal2006numerical}, which needs to solve the following 
the equation with $\mu_i = (i-0.5)/N, i\in[N]$:
\begin{equation*}
    \mF(\vx)_i = x_i - \left(1-\frac{c}{2N}\sum_{j=1}^N\frac{\mu_ix_j}{\mu_i+\mu_j}\right)^{-1},
\end{equation*}
where $\vx = (x_1, \dots, x_N)^\top \in \sR^N, \mF(\vx) = (\mF(\vx)_1, \dots, \mF(\vx)_N)^\top \in \sR^N$.
It is known \cite{mullikin1968some} that the discrete analog has
solutions for $ c \in (0, 1) $. Thus it is differentiable, and locally Lipschitz continuous restricting in a bounded domain near the optimal solution $\vx^*$, which satisfies Assumption \ref{ass:lisp}. We adopt Broyden's method to solve $\mF(\vx) = \bm{0}$, and set the same starting point $\vx_0 = \vx^*+0.1\cdot\norm{\vx^*}\cdot\bm{\epsilon}$, where $\bm{\epsilon} \sim \text{Unif}\left(\gS^{N-1}\right)$ and $\mH_0 = \mB_0^{-1}$, where $\mB_0=s \mJ(\vx_0)$ from practice with various $s\in\sR$. We also plot $\norm{\mF(\vx_k)}$ to trace the objective and $\frac{\norm{\mB_k-\mJ(\vx_k)}_F}{\norm{\mJ(\vx_k)}_F}$ to trace the Hessian approximation error in each epoch in Figure \ref{fig:broyden-H-equation}. 

As Figure \ref{fig:broyden-H-equation} depicted, we discover consistent phenomena to Regularized Log-Sum-Exp problem.
We observe that greedy and random methods are suitable for the ``bad initial'', while the original Broyden's method is suitable for ``good initial''. 
Moreover, greedy and random methods make the training curve more stable and the approximation of $\mJ(\vx_k)$ more accurate.
Hence, we consider the performance of greedy and random Broyden's methods is also good. 

\section{Conclusion} \label{sec:conclusion}
In this work, we have presented two variants of Broyden's method for solving general nonlinear equations, i.e.,  the greedy Broyden's method and random Broyden's method.
We have proved explicit condition-number-free local superlinear convergence rates of these two algorithms. 
Our algorithms can achieve that the approximate Jacobian will converge to the exact Jacobian and the convergence rates are asymptotically faster than the original Broyden's method.
We hope our variants of Broyden's method could give a more quantitative understanding of the family of quasi-Newton methods.
After all, there are still some remaining questions. 
For example, whether one can design novel quasi-Newton methods achieving a faster convergence rate than our greedy Broyden's method.
We leave this as future work.

\pagebreak 
\appendix

\section{Auxiliary Theorems}

\begin{lemma}[Extension of \citet{dennis1996numerical} Theorem 3.1.4]\label{aux-lemma2}
\quad\\
	Let $\|\cdot \|$ be any norm on $\sR^{n\times n}$ that obeys
	$ \|\mA\mB\| \leq \|\mA\|\cdot \|\mB\| $ for any $\mA,\mB \in\sR^{n\times n}$
	and $\|\mI_n\|=1$. 
	Letting $\mE \in\sR^{n\times n}$ with $\norm{\mE} < 1$, then $\left(\mI_n-\mE\right)^{-1}$ exists and 
	\begin{equation*}
		\norm{\left(\mI_n-\mE\right)^{-1}} \leq \frac{1}{1-\norm{\mE}},
	\end{equation*}
	and
	\begin{equation*}
		\norm{\left(\mI_n-\mE\right)^{-1}-\mI_n} \leq \frac{\norm{\mE}}{1-\norm{\mE}}.
	\end{equation*}
\end{lemma}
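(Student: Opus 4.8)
The plan is to use the Neumann (geometric) series $\sum_{k=0}^{\infty}\mE^k$ as an explicit candidate for $(\mI_n-\mE)^{-1}$. First I would establish convergence of this series in $(\sR^{n\times n},\norm{\cdot})$: by submultiplicativity and $\norm{\mI_n}=1$ we get $\norm{\mE^k}\le\norm{\mE}^k$, so for $N<M$ the partial sums $\mS_N:=\sum_{k=0}^{N}\mE^k$ satisfy $\norm{\mS_M-\mS_N}\le\sum_{k=N+1}^{M}\norm{\mE}^k\le \norm{\mE}^{N+1}/(1-\norm{\mE})\to 0$; hence $\{\mS_N\}$ is Cauchy. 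Since $\sR^{n\times n}$ is finite-dimensional, it is complete (all norms equivalent), so $\mS_N\to\mS$ for some $\mS\in\sR^{n\times n}$.

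Next I would identify $\mS$ as the inverse by telescoping: $\mS_N(\mI_n-\mE)=(\mI_n-\mE)\mS_N=\mI_n-\mE^{N+1}$, and $\norm{\mE^{N+1}}\le\norm{\mE}^{N+1}\to 0$. Passing to the limit — matrix multiplication is continuous in finite dimension — yields $\mS(\mI_n-\mE)=(\mI_n-\mE)\mS=\mI_n$, so $\mI_n-\mE$ is invertible with $(\mI_n-\mE)^{-1}=\mS$.

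Finally, both bounds follow from the triangle inequality for the series together with continuity of the norm. Since $\mS=\lim_N\mS_N$, we get $\norm{\mS}\le\sum_{k=0}^{\infty}\norm{\mE}^k=\dfrac{1}{1-\norm{\mE}}$; and since $\mS-\mI_n=\sum_{k=1}^{\infty}\mE^k$, the same estimate gives $\norm{\mS-\mI_n}\le\sum_{k=1}^{\infty}\norm{\mE}^k=\dfrac{\norm{\mE}}{1-\norm{\mE}}$, which is the claimed inequality.

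I do not expect a genuine obstacle here. The only point deserving care is that $\norm{\cdot}$ is an \emph{abstract} submultiplicative norm satisfying $\norm{\mI_n}=1$, not necessarily an operator norm, so I cannot invoke spectral-radius facts (e.g.\ $\rho(\mE)\le\norm{\mE}$) without further justification — this is precisely the ``extension'' over \citet{dennis1996numerical}. The Neumann-series route avoids this entirely, using only submultiplicativity, $\norm{\mI_n}=1$, and completeness of $\sR^{n\times n}$. (A shortcut for the first bound alone, once invertibility is known, is to write $\mS=\mI_n+\mE\mS$ and take norms to get $\norm{\mS}\le 1+\norm{\mE}\,\norm{\mS}$; but invertibility itself still needs the series argument, so I would keep the series as the backbone of the proof.)
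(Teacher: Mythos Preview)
Your proposal is correct and follows essentially the same approach as the paper: both use the Neumann series $\sum_{k\ge 0}\mE^k$, establish convergence via $\norm{\mE^k}\le\norm{\mE}^k$, verify the telescoping identity $(\mI_n-\mE)\mS_N=\mI_n-\mE^{N+1}\to\mI_n$, and then bound $\norm{\mS}$ and $\norm{\mS-\mI_n}$ by the geometric sums $\sum_{k\ge 0}\norm{\mE}^k$ and $\sum_{k\ge 1}\norm{\mE}^k$. Your write-up is in fact slightly more careful than the paper's in explicitly invoking completeness of $\sR^{n\times n}$ for the Cauchy argument.
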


\begin{proof}
	Since $\norm{\mE} < 1$, we have $0 \leq \liminf_{k\to \infty}\norm{\mE^k} \leq \limsup_{k\to \infty}\norm{\mE^k} \leq \lim_{k\to \infty}\norm{\mE}^k =0 $, thus $\lim_{k\to \infty}\norm{\mE^k}=0$ and $\lim_{k\to \infty} \mE^k = \bm{0}$.
	Moreover, we could get $\lim_{k\to \infty} \left(\mI_n-\mE\right)\left(\sum_{i=0}^k \mE^i\right) = \lim_{k\to \infty} \mI_n-\mE^{k+1} = \mI_n$. Hence
	\begin{equation*}
		\left(\mI_n-\mE\right)^{-1} = \sum_{i=0}^\infty \mE^i = \mI_n+\mE+\mE^2+\dots.
	\end{equation*}
	Therefore $\left(\mI_n-\mE\right)^{-1}$ exists, and
	\begin{equation*}
		\norm{\left(\mI_n-\mE\right)^{-1}} = \norm{\sum_{i=0}^\infty \mE^i} \leq \sum_{i=0}^\infty \norm{ \mE^i} \leq \sum_{i=0}^\infty \norm{ \mE}^i = \frac{1}{1-\norm{\mE}},
	\end{equation*}
	and
	\begin{equation*}
		\norm{\left(\mI_n-\mE\right)^{-1}-\mI_n} = \norm{\sum_{i=1}^\infty \mE^i} \leq \sum_{i=1}^\infty \norm{\mE^i}  \leq \sum_{i=1}^\infty \norm{ \mE}^i = \frac{\norm{\mE}}{1-\norm{\mE}}.
	\end{equation*}
	The proof is finished.
\end{proof}

\section{Proof of Proposition \ref{prop:Broyden}}
\begin{proof}
	The Broyden's update rule gives
	\begin{equation*}
	\mB_+-\mA=\mB+\frac{\left(\mA-\mB\right)\vu\vu^\top}{\vu^\top\vu}-\mA = \left(\mB-\mA\right)\left(\mI_n-\frac{\vu\vu^\top}{\vu^\top\vu}\right).
	\end{equation*}
	Hence we get
	\[ \overline{\mB}_+ = \overline{\mB} \left(\mI_n-\frac{\vu\vu^\top}{\vu^\top\vu}\right). \]
	Therefore,
	\begin{equation*}
	\begin{aligned}
	\overline{\mB}_+\overline{\mB}_+^\top &= \overline{\mB} \left(\mI_n-\frac{\vu\vu^\top}{\vu^\top\vu}\right) \left(\mI_n-\frac{\vu\vu^\top}{\vu^\top\vu}\right)^\top\overline{\mB}^\top = \overline{\mB} \left(\mI_n - \frac{\vu\vu^\top}{\vu^\top\vu}\right) \overline{\mB}^\top \\ 
	&= \overline{\mB}\overline{\mB}^\top-  \frac{\overline{\mB}\vu\vu^\top\overline{\mB}^\top}{\vu^\top\vu}.
	\end{aligned}
	\end{equation*}
 Then Eq.~\eqref{bro-up-prop} is proved.
	
	Now  we take the trace in both sides of Eq.~\eqref{bro-up-prop}, we obtain
	\[ \norm{\overline{\mB}_+}_F^2 = \norm{\overline{\mB}}_F^2 -   \frac{\norm{\overline{\mB}\vu}^2}{\norm{\vu}^2} \leq \norm{\overline{\mB}}_F^2. \]
	Moreover, from Eq.~\eqref{bro-up-prop}, for any $\vv\in\sR^n$, with $\norm{\vv}=1$, we get
	\[ \norm{\overline{\mB}_+^\top\vv}^2 = \norm{\overline{\mB}^\top\vv}^2 - \frac{\left(\vv^\top\overline{\mB}\vu\right)^2}{\norm{\vu}^2} \leq \norm{\overline{\mB}^\top\vv}^2 \leq \norm{\overline{\mB}}^2. \]
	Then we obtain
	\[ \norm{\overline{\mB}_+}^2 = \sup_{\norm{\vv}=1, \vv\in\sR^n} \norm{\overline{\mB}_+^\top\vv}^2 \leq \norm{\overline{\mB}}^2. \]
	Thus, Eq.~\eqref{bro-up-prop-ex} is proved.
\end{proof}

Using Assumptions \ref{ass:lisp}, we could give the estimator of objective and its Jacobian around $\vx^*$.
\begin{lemma}\label{lemma:prop}
	Under Assumption \ref{ass:lisp}, and letting $\mJ = \int_{0}^{1} \mJ\left(\vx+t\vu\right)dt$ for some $\vx, \vu \in\sR^n$, then the following inequalities hold:
	\begin{align}
		\|\mF(\vx) - \mJ(\vx^*)(\vx-\vx^*) \| \leq \frac{M}{2} \norm{ \vx-\vx^* }^2. \label{eq:lemma-j2}
	\end{align}		
\end{lemma}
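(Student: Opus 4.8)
The plan is to prove the estimate by writing $\mF(\vx) - \mF(\vx^*)$ as an integral of the Jacobian along the segment joining $\vx^*$ to $\vx$, and then comparing that integrated Jacobian with the constant value $\mJ(\vx^*)$ using the Lipschitz hypothesis of Assumption~\ref{ass:lisp}. Concretely, since $\mF(\vx^*) = \bm{0}$ by definition of the solution, I would start from the fundamental theorem of calculus applied to $t \mapsto \mF(\vx^* + t(\vx - \vx^*))$, which gives
\[
\mF(\vx) = \mF(\vx) - \mF(\vx^*) = \left(\int_0^1 \mJ\big(\vx^* + t(\vx-\vx^*)\big)\,dt\right)(\vx - \vx^*).
\]

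Next I would subtract $\mJ(\vx^*)(\vx-\vx^*)$ from both sides and pull the constant matrix inside the integral, obtaining
\[
\mF(\vx) - \mJ(\vx^*)(\vx-\vx^*) = \left(\int_0^1 \big[\mJ(\vx^* + t(\vx-\vx^*)) - \mJ(\vx^*)\big]\,dt\right)(\vx-\vx^*).
\]
Taking norms, using submultiplicativity and moving the norm inside the integral, then applying Assumption~\ref{ass:lisp} with the point $\vx^* + t(\vx-\vx^*)$, whose distance to $\vx^*$ is exactly $t\norm{\vx-\vx^*}$, bounds the integrand by $M t \norm{\vx - \vx^*}$. Integrating $t$ from $0$ to $1$ yields the factor $M/2$, and the remaining $\norm{\vx-\vx^*}$ from the trailing vector gives the claimed $\frac{M}{2}\norm{\vx-\vx^*}^2$.

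I do not expect any genuine obstacle here; this is a standard quasi-Newton estimate. The only points requiring a little care are: (i) confirming that the hypothesis of Assumption~\ref{ass:lisp} is applicable at the interpolated points $\vx^* + t(\vx - \vx^*)$, which it is since the assumption only demands Lipschitz continuity relative to $\vx^*$ (exactly the direction we use); and (ii) justifying the interchange of norm and integral, which is the standard inequality $\norm{\int_0^1 g(t)\,dt} \le \int_0^1 \norm{g(t)}\,dt$ for a continuous matrix-valued integrand. Note also that the auxiliary quantity $\mJ = \int_0^1 \mJ(\vx + t\vu)\,dt$ introduced in the lemma statement is simply a notational device for this averaged Jacobian (with $\vx = \vx^*$, $\vu = \vx - \vx^*$); the proof uses it in precisely the form above. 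The whole argument is three or four lines of display math.
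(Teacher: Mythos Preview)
Your proposal is correct and matches the paper's proof essentially line for line: the paper also writes $\mF(\vx) - \mJ(\vx^*)(\vx-\vx^*) = \int_0^1 [\mJ(\vx^*+t\vs) - \mJ(\vx^*)]\vs\,dt$ with $\vs = \vx - \vx^*$, bounds the integrand via Assumption~\ref{ass:lisp} by $Mt\norm{\vs}^2$, and integrates to obtain $\frac{M}{2}\norm{\vx-\vx^*}^2$.
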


\begin{proof}
	We denote $\vs = \vx-\vx^*$. Then from Assumption \ref{ass:lisp}, we get
	\begin{equation*}
		\begin{aligned}
			\norm{\mF(\vx) - \mJ(\vx^*)(\vx-\vx^*)} &= \norm{\mF(\vx) - \mF(\vx^*) - \mJ(\vx^*)(\vx-\vx^*)} = \norm{ \int_{0}^{1} \left[\mJ\left(\vx^*+t\vs\right) - \mJ(\vx^*)\right]\vs dt } \\
			&\leq \int_{0}^{1} \norm{\mJ\left(\vx^*+t\vs\right) - \mJ(\vx^*)} \cdot \norm{\vs} dt \stackrel{(\ref{eq:lisp})}{\leq}  \int_{0}^{1} Mt \norm{\vs}^2 dt = \frac{M}{2} \norm{ \vx-\vx^*}^2,
		\end{aligned}
	\end{equation*}
	which shows that Eq.~\eqref{eq:lemma-j2} holds. 
\end{proof}

\section{Missing Proofs}
\subsection{Proof of Lemma \ref{lemma:b-progress}}
\begin{proof}
	For the \textit{greedy update}, from Proposition \ref{prop:Broyden} with $\mC=\mI_n$, we obtain
	\begin{equation*}
	\left(\mB_+-\mA\right)\left(\mB_+-\mA\right)^\top = \left(\mB-\mA\right)^\top\left(\mB-\mA\right)-  \frac{\left(\mB-\mA\right)\vu\vu^\top\left(\mB-\mA\right)^\top}{\vu^\top\vu}.
	\end{equation*}
	Taking the trace in both sides and using Eq.~\eqref{eq:greedy-u}, we get
	\begin{equation*}
	\begin{aligned}
	\norm{\mB_+-\mA}_F^2 &= \norm{\mB-\mA}_F^2 - 
	\max_{\vu \in \{\ve_1, \dots, \ve_n\}} \frac{\vu^\top\left(\mB-\mA\right)^\top \left(\mB-\mA\right)\vu}{\vu^\top\vu} \\
	&\leq \norm{\mB-\mA}_F^2 -  \frac{1}{n}\sum_{i=1}^n \frac{\ve_i^\top\left(\mB-\mA\right)^\top \left(\mB-\mA\right)\ve_i}{\ve_i^\top\ve_i} \\
	&=\norm{\mB-\mA}_F^2 - \tr\left[\left(\mB-\mA\right)^\top \left(\mB-\mA\right) \cdot \frac{1}{n}\sum_{i=1}^n \ve_i\ve_i^\top\right] \\
	&= \left(1-\frac{1}{n}\right) \norm{\mB-\mA}_F^2
	\end{aligned}
	\end{equation*}
	Therefore, the greedy choice of $\vu$ leads to
	\[ \norm{\mB_+-\mA}_F^2 \leq \left(1-\frac{1}{n}\right) \norm{\mB-\mA}_F^2. \]
	For the \textit{random update}, noting that $\tr\left(\E_{\vu}\frac{\vu}{\norm{\vu}} \cdot \frac{\vu^\top}{\norm{\vu}}\right) = 1$ , we obtain
	\begin{equation*}
	\E_{\vu} \left[ \frac{\vu \vu^\top}{\vu^\top\vu} \right] = \E_{\vu} \left[ \frac{\vu}{\norm{\vu}} \cdot \frac{\vu^\top}{\norm{\vu}}\right] = \frac{1}{n} \mI_n.
	\end{equation*}
	Hence, we obtain 
	\begin{equation*}
	\begin{aligned}
	\E_{\vu} \norm{\mB_+-\mA}_F^2 &= \norm{\mB-\mA}_F^2 -
	\E_{\vu} \frac{\vu^\top\left(\mB-\mA\right)^\top \left(\mB-\mA\right)\vu}{\vu^\top\vu} \\
	&= \norm{\mB-\mA}_F^2 -  \tr\left[\left(\mB-\mA\right)^\top \left(\mB-\mA\right) \E_{\vu} \frac{\vu\vu^\top}{\vu^\top\vu}\right] \\
	&=\norm{\mB-\mA}_F^2 - \tr\left[\left(\mB-\mA\right)^\top \left(\mB-\mA\right) \cdot \frac{1}{n}\mI_n\right] \\
	&= \left(1-\frac{1}{n}\right) \norm{\mB-\mA}_F^2
	\end{aligned}
	\end{equation*}
	Therefore, the random choice of $\vu$ leads to
	\[ \E_{\vu} \norm{\mB_+-\mA}_F^2 = \left(1-\frac{1}{n}\right) \norm{\mB-\mA}_F^2. \]
\end{proof}

\subsection{Proof of Lemma \ref{lemma:sigma-up}}
\begin{proof}
	Following the definition of $\sigma_{k+1}$, we have 
	\begin{equation}\label{eq:bde}
	\mB_{k+1} -\mJ_{k+1} \stackrel{(\ref{eq:x-up})}{=} \mB_k +  \frac{\left(\mJ_{k+1}-\mB_k\right)\vu_k\vu_k^\top}{\vu_k^\top\vu_k}-\mJ_{k+1} = \left(\mB_k-\mJ_{k+1}\right)\left[ \mI_n- \frac{\vu_k\vu_k^\top}{\vu_k^\top\vu_k}\right].
	\end{equation}
	Take $\norm{\cdot}_F$ in both sides of Eq. (\ref{eq:bde}) and take expectation only on $\vu_{k}$ with fixed $\vu_1, \dots, \vu_{k-1}$, we get
	\begin{equation*}
	\begin{aligned}
	\E_{\vu_{k}} \left[\sigma_{k+1}^2\right] & \stackrel{(\ref{jac-measure})}{=} \E_{\vu_{k}}\left[ \norm{\mB_{k+1}-\mJ_{k+1}}_F^2\right] \stackrel{(\ref{eq:bde})}{=} 
	\E_{\vu_{k}} \left[\norm{\left(\mB_k-\mJ_{k+1}\right)\left[ \mI_n- \frac{\vu_k\vu_k^\top}{\vu_k^\top\vu_k}\right]}_F^2\right].
	\end{aligned}
	\end{equation*}
	From Proposition \ref{prop:Broyden}, we obtain
	\begin{equation}\label{eq:s-up-part1}
	\norm{\left(\mB_k-\mJ_{k+1}\right)\left[ \mI_n- \frac{\vu_k\vu_k^\top}{\vu_k^\top\vu_k}\right]}_F^2 \leq \norm{\mB_k-\mJ_{k+1}}_F^2.
	\end{equation}
	From Lemma \ref{lemma:b-progress}, we obtain
	\begin{equation}\label{eq:s-up-part11}
	\E_{\vu_{k}} \left[ \norm{\left(\mB_k-\mJ_{k+1}\right)\left[ \mI_n- \frac{\vu_k\vu_k^\top}{\vu_k^\top\vu_k}\right]}_F^2 \right]\leq \left(1-\frac{1}{n}\right) \norm{\mB_k-\mJ_{k+1}}_F^2.
	\end{equation}
	Note that
	\begin{equation}\label{eq:s-up-part2}
	\begin{aligned}
	\norm{\mB_k-\mJ_{k+1}}_F^2 &= \norm{\mB_k-\mJ_{k}+\mJ_k-\mJ_{k+1}}_F^2 \\
	&\leq \norm{\mB_k-\mJ_{k}}_F^2 + 2\norm{\mB_k-\mJ_{k}}_F\cdot \norm{\mJ_k-\mJ_{k+1}}_F+\norm{\mJ_k-\mJ_{k+1}}_F^2 \\
	&=\sigma_k^2 + 2\sigma_k \cdot \norm{\mJ_k-\mJ_{k+1}}_F+\norm{\mJ_k-\mJ_{k+1}}_F^2.
	\end{aligned}
	\end{equation}
	By Lemma \ref{lemma:prop}, we get
	\begin{equation}\label{eq:jk}
	\norm{\mJ_{k}-\mJ_{k+1}}_F \leq \sqrt{n} \norm{\mJ_{k}-\mJ_{k+1}} \leq \sqrt{n}\left[\norm{\mJ_{k}-\mJ_*}+\norm{\mJ_{k+1}-\mJ_*}\right] \stackrel{(\ref{eq:lisp})}{\leq} M\sqrt{n}\left(r_k+r_{k+1}\right).
	\end{equation} 
	Combining Eq.~\eqref{eq:s-up-part1}, \eqref{eq:s-up-part2}, and \eqref{eq:jk}), we get
	\begin{equation*}
	\sigma_{k+1}^2 \leq \sigma_k^2 + 2\sigma_k \sqrt{n}M\left(r_k+r_{k+1}\right)+n M^2\left(r_k+r_{k+1}\right)^2.
	\end{equation*}
	Combining Eq.~\eqref{eq:s-up-part11}, \eqref{eq:s-up-part2}, and \eqref{eq:jk}, we obtain
	\begin{equation*}
	\E_{\vu_{k}} \left[\sigma_{k+1}^2\right] \leq \left(1-\frac{1}{n}\right)\sigma_k^2 + 2\sigma_k \sqrt{n}M\left(r_k+r_{k+1}\right)+n M^2\left(r_k+r_{k+1}\right)^2.
	\end{equation*}
\end{proof}

\subsection{Proof of Lemma \ref{lemma:r-up}}
\begin{proof}
	Note that
	\begin{equation*}
	\begin{aligned}
	\vs_{k+1} &\stackrel{(\ref{var-defn})}{=} \vx_{k+1}-\vx^* \stackrel{(\ref{eq:x-up})}{=}\vx_{k}-\mB_k^{-1}\mF(\vx_k)-\vx^* = \mB_k^{-1}\bigg[\mB_k \left(\vx_k-\vx^*\right)-\mF(\vx_k)\bigg]\\
	&= \mB_k^{-1}\bigg[\left(\mB_k-\mJ_{*}\right) \left(\vx_k-\vx^*\right)-\left[\mF(\vx_k)-\mJ_{*}\left(\vx_k-\vx^*\right)\right]\bigg].
	\end{aligned}
	\end{equation*}	
	Now we take the norm $\norm{\cdot}$ in both sides of above equation which leads to 
	\begin{equation}\label{eq:rk-up1}
	\begin{aligned}
	r_{k+1}
	&\leq \norm{\mB_k^{-1}\mJ_*} \cdot \norm{\mJ_*^{-1}} \cdot \bigg[ \left(\norm{\mB_k-\mJ_k}+\norm{\mJ_k-\mJ_*}\right) \cdot \norm{\vx_k-\vx^*} + \norm{\mF(\vx_k)-\mJ_{*} \left(\vx_k-\vx^*\right)} \bigg] \\
	&\stackrel{(\ref{jac-measure}, \ref{eq:lisp})}{\leq} c\norm{\mB_k^{-1}\mJ_*} \cdot \bigg[\left(\sigma_k+Mr_k\right) r_k + \norm{\mF(\vx_k)-\mJ_{*} \left(\vx_k-\vx^*\right)} \bigg] \\
	&\stackrel{(\ref{eq:lemma-j2})}{\leq} c\norm{\mB_k^{-1}\mJ_*} \cdot \left[\left(\sigma_k+Mr_k\right) r_k +\frac{M r_k^2}{2}\right].
	\end{aligned}
	\end{equation}
	If $c\left(\sigma_k+Mr_k\right)<1$, we get
	\[ \norm{\mI_n-\mJ_{*}^{-1}\mB_k} = \norm{\mJ_{*}^{-1}\left(\mB_k-\mJ_{*}\right)} \leq \norm{\mJ_{*}^{-1}}\cdot \left[\norm{\mB_k-\mJ_k}+\norm{\mJ_k-\mJ_{*}} \right] \stackrel{(\ref{jac-measure},\ref{eq:lisp})}{\leq} c\left(\sigma_k+Mr_k\right) < 1. \]
	Hence from Lemma \ref{aux-lemma2}, we obtain $\mJ_{*}^{-1}\mB_k$ is nonsingular and 
	\begin{equation}\label{eq:inv}
	\norm{\mB_k^{-1}\mJ_*} \leq \frac{1}{1-\norm{\mI_n-\mJ_{*}^{-1}\mB_k}} \leq \frac{1}{1-c\left(\sigma_k+Mr_k\right)}, \text{ if } c\left(\sigma_k+Mr_k\right)<1.
	\end{equation}
	Finally, we get
	\begin{equation*}
	r_{k+1}\stackrel{(\ref{eq:rk-up1}, \ref{eq:inv})}{\leq} \frac{3cMr_k/2+c\sigma_k}{1-c\left(\sigma_k+Mr_k\right)}r_k, \text{ if } c\left(\sigma_k+Mr_k\right)<1.
	\end{equation*}
\end{proof}

\subsection{Proof of Lemma \ref{lemma:base}}
\begin{proof}
	From Lemma \ref{lemma:sigma-up}, we have derived the update of $\sigma_{k}$.
	\begin{equation}\label{eq:sk-up}
	\sigma_{k+1}^2 \stackrel{(\ref{eq:sigma-update1})}{\leq} \sigma_k^2 + 2\sigma_k \sqrt{n}M\left(r_k+r_{k+1}\right)+n M^2\left(r_k+r_{k+1}\right)^2.
	\end{equation}
	From Lemma \ref{lemma:r-up}, we have obtained the update of $r_{k}$.
	\begin{equation}\label{eq:rk-up}
	r_{k+1} \stackrel{(\ref{eq:r-update})}{\leq} \frac{3cMr_k/2+c\sigma_k}{1-c\left(\sigma_k+Mr_k\right)}r_k, \text{ if } c\left(\sigma_k+Mr_k\right)<1.
	\end{equation}
	Now we prove by induction. 
	
	1) When $k=0$, we have $c\sigma_0 + 3cMr_0 / 2 \stackrel{(\ref{eq:init-cond})}{\leq} c\sigma_0 + \left(\frac{q}{q+1}-c\sigma_0 \right) = \frac{q}{q+1}<1$, showing that Eq.~\eqref{eq:rk-up} holds and 
	\[ r_{1} \leq \frac{3cMr_0/2+c\sigma_0}{1-c\sigma_0-cMr_0}r_0 \leq  \frac{3cMr_0/2+c\sigma_0}{1-c\sigma_0-3cMr_0/2}r_0\stackrel{(\ref{eq:init-cond})}{\leq} \frac{\frac{q}{1+q}}{1-\frac{q}{1+q}} r_0 = q r_0. \]
	Thus the results hold.
	
	2) Let $k \geq 0$, and suppose that Eq.~\eqref{eq:sigma-bound}, and \eqref{eq:linear-con} hold for all indices up to $ k$.
	Then it holds that $r_k\leq q^kr_0 \leq r_0 $, and 
	\begin{equation}\label{eq:sigma-upper}
	c^2\sigma_k^2 \stackrel{(\ref{eq:sigma-bound})}{\leq} c^2\sigma_0^2 +\frac{1+q}{1-q}\left(2c\sqrt{n}Mr_0+ nc^2M^2r_0^2\right) = c^2\sigma_0^2 + \frac{1+q}{1-q}\left(2a+a^2\right),
	\end{equation}
	where we take $a := \sqrt{n}cMr_0$, and from inductive assumption,
	\begin{equation}\label{eq:a}
	a \stackrel{(\ref{eq:init-cond})}{<} \frac{q}{2(q+1)}.
	\end{equation}	
	When $t\leq \frac{q(1-q)}{12}\leq \frac{q(1-q)}{3(q+1)^2}<1$, we have
	\begin{equation}\label{p1}
	\frac{1+q}{1-q}\left(2t+\frac{q}{q+1}t^2\right) + \frac{4tq}{\sqrt{n}(q+1)} \leq \frac{2(1+q)+q}{1-q} \cdot t+ \frac{t}{1-q} = \frac{3(q+1)t}{1-q} \leq \frac{q}{1+q}.
	\end{equation}
	Setting $b := t\left(\frac{q}{q+1}-c\sigma_0\right) \leq \frac{q}{q+1}t$, and multiplying $\left(\frac{q}{q+1}-c\sigma_0\right)$ in both sides of Eq.~\eqref{p1}, we obtain 
	\[ \frac{1+q}{1-q}\left(2b+b^2\right) + \frac{4b q}{\sqrt{n}(q+1)} \leq \frac{q}{1+q}\left(\frac{q}{1+q}-c\sigma_0\right)\stackrel{(\ref{eq:init-cond})}{\leq} \left(\frac{q}{1+q}\right)^2-c^2\sigma_0^2, \]
	leading to
	\[ \left(\frac{q}{q+1}-\frac{2b}{\sqrt{n}}\right)^2 \geq c^2\sigma_0^2 + \frac{1+q}{1-q}\left(2b+b^2\right). \]
	Hence we obtain when $a \leq \frac{q(1-q)}{12} \left(\frac{q}{1+q} - c\sigma_0\right)$, 
	\begin{equation}\label{eq:only}
	\begin{aligned}
	\left(\frac{q}{q+1}-\frac{2a}{\sqrt{n}}\right)^2 \geq c^2\sigma_0^2 + \frac{1+q}{1-q}\left(2a+a^2\right)
	\stackrel{(\ref{eq:a})}{\Leftrightarrow}
	\frac{2a}{\sqrt{n}}+\sqrt{c^2\sigma_0^2 + \frac{1+q}{1-q}\left(2a+a^2\right)} \leq \frac{q}{q+1}.
	\end{aligned}
	\end{equation}
	This shows that
	\begin{equation}\label{eq:cmr-si}
	cMr_k + c\sigma_k \leq 2cMr_k + c\sigma_k \leq 2cMr_0 + c\sigma_k \stackrel{(\ref{eq:sigma-upper})}{\leq } \frac{2a}{\sqrt{n}}+\sqrt{c^2\sigma_0^2 + \frac{1+q}{1-q}\left(2a+a^2\right)} \stackrel{(\ref{eq:only})}{\leq} \frac{q}{q+1}<1.
	\end{equation}
	Thus Eq.~\eqref{eq:rk-up} holds and 
	\[ r_{k+1} \stackrel{(\ref{eq:rk-up})}{\leq} \frac{3cMr_k/2+c\sigma_k}{1-c\left(\sigma_k+Mr_k\right)} r_k \leq \frac{(q+2)cMr_k+c\sigma_k}{1-\left(cMr_k + c\sigma_k\right)} r_k \stackrel{(\ref{eq:cmr-si})}{\leq} qr_k. \]
	Furthermore, from Eq.~\eqref{eq:sk-up} and $c\sigma_i < 1, \forall i \leq k$, we could obtain 
	\begin{equation*}
	\begin{aligned}
	c^2\sigma_{k+1}^2 &\stackrel{\eqref{eq:sk-up}}{\leq} c^2\sigma_k^2 + 2c^2\sigma_k \sqrt{n}M\left(r_k+r_{k+1}\right)+n c^2M^2\left(r_k+r_{k+1}\right)^2 \\
	&\leq c^2\sigma_k^2 + 2c \sqrt{n}M(q+1)r_k+n c^2M^2(q+1)^2r_k^2 \leq ... \\ &\leq c^2\sigma_0^2 + 2c\sqrt{n}M(q+1)\sum_{i=0}^{\infty} r_i + nc^2M^2(q+1)^2\sum_{i=0}^{\infty} r_i^2 \\
	&= c^2\sigma_0^2 + \frac{1+q}{1-q}\left(2c\sqrt{n}Mr_0+ nc^2M^2r_0^2\right) = c^2\sigma_0^2 + \frac{1+q}{1-q}\left(2a+a^2\right) \\
	&\stackrel{\eqref{eq:only}}{\leq} \left(\frac{q}{q+1}-\frac{2a}{\sqrt{n}}\right)^2 \stackrel{\eqref{eq:a}}{\leq} \left(\frac{q}{q+1}\right)^2.
	\end{aligned}
	\end{equation*}
	Hence $c^2\sigma_{k+1}^2 \leq c\sigma_0^2 + \frac{1+q}{1-q}\left(2c\sqrt{n}Mr_0+ nc^2M^2r_0^2\right) \leq \frac{q^2}{(1+q)^2}$,
	showing that Eq.~\eqref{eq:sigma-bound} holds for $k+1$.
\end{proof}

\subsection{Proof of Corollary \ref{coro:good}}
\begin{proof}
	To give an upper and lower bound of $q_m$, we consider 
	\begin{equation*}
	f(q) := q(1-q)\left(\frac{q}{1+q}-c\sigma_0\right), \ \frac{q}{q+1} \geq c\sigma_0.
	\end{equation*}
	Note that $q(1-q)$ is increasing when $q \leq \frac{1}{2}$, and $\frac{q}{q+1}$ is also 
	increasing when $q>0$. Hence $f(q)$ is increasing when $\frac{1}{2} \geq q \geq \frac{c\sigma_0}{1-c\sigma_0} \geq c\sigma_0$. Moreover, we have
	\begin{equation*}
	f\left(\frac{1}{2}\right) = \frac{1}{4} \left(\frac{1}{3}-c\sigma_0\right) \stackrel{(\ref{eq:init-super})}{\geq} 12\sqrt{n} c M r_0,
	\end{equation*}
	showing that $q_m \leq \frac{1}{2}$. Then we obtain
	\begin{equation*}
	12 \sqrt{n} c M r_0 = f(q_m) \geq \frac{q_m}{2}\left(\frac{q_m}{2}-c\sigma_0\right), q_m \geq c\sigma_0,
	\end{equation*}
	which gives
	\begin{equation}\label{eq:qm-upper}
	q_m \leq c\sigma_0 +\sqrt{c^2\sigma_0^2+48\sqrt{n}c M r_0} \leq 2c\sigma_0+7\sqrt{\sqrt{n}c M r_0} \leq 7\left(c\sigma_0+\sqrt{\sqrt{n}c M r_0}\right).
	\end{equation}
	Furthermore, 
	\begin{equation*}
	12\sqrt{n}c M r_0 = f(q_m) \leq q_m\left(q_m-c\sigma_0\right), q_m \geq c\sigma_0,
	\end{equation*}
	which gives
	\begin{equation}\label{eq:qm-lower}
	q_m \geq \frac{c\sigma_0+ \sqrt{c^2\sigma_0^2+48\sqrt{n}c M r_0}}{2} \geq \frac{c\sigma_0}{2}+3\sqrt{\sqrt{n}c M r_0} \geq 0.5\left(c\sigma_0+\sqrt{\sqrt{n}c M r_0}\right).
	\end{equation}
	Combining Eq.~\eqref{eq:qm-upper}  and \eqref{eq:qm-lower}, we obtain $q_m = \Theta\left(c\sigma_0+\sqrt{\sqrt{n}c M r_0}\right)$.
\end{proof}

\begin{remark}
	We note that Lemma \ref{lemma:base} has shown that 
	\[ \sqrt{n} c M r_0  \leq \frac{q(1-q)}{12} \left(\frac{q}{q+1}-c\sigma_0\right) \leq \frac{1}{48} \left(\frac{q}{q+1}-c\sigma_0\right), \]
	showing that  $48\sqrt{n}cMr_0 + c\sigma_0 \leq \frac{q}{q+1} = \Theta(1)$. Hence we could assume Eq. \eqref{eq:init-super} in Corollary \ref{coro:good} holds without loss of generality.
\end{remark}

\pagebreak
\bibliography{reference}
\bibliographystyle{plainnat}

\end{document}